\newtheorem{theorem}{Theorem}[section]
\newtheorem{lemma}[theorem]{Lemma}
\theoremstyle{definition}
\newtheorem{definition}[theorem]{Definition}
\theoremstyle{remark}
\newtheorem{remark}[theorem]{Remark}
\numberwithin{equation}{section}
\begin{document}

\title{Knot surgery $4$-manifolds $E(n)_K$ without $1$- and $3$-handles}

%    Information for first author
\author{Ju A Lee}
%    Address of record for the research reported here
\address{Research institute of Mathematics, 
Seoul National University, Seoul  08826, Republic of Korea}
\email{jualee@snu.ac.kr}
%    \thanks will become a 1st page footnote.
%\thanks{}

%    Information for second author
\author{Ki-Heon Yun}
\address{School of Mathematics, Statistics and Data science, 
Sungshin Women's University, Seoul 02844, Republic of Korea}
\email{kyun@sungshin.ac.kr}
\thanks{The first named author was supported by the National Research Foundation of Korea (NRF) Grant funded by the Korean government (RS-2024-00392067). The second named author was supported by the National Research 
Foundation of Korea (NRF) Grant funded by the Korean government (2019R1A2C1010302).}

%    General info
\subjclass[2020]{Primary 57R65, 57R55}

\date{\today}

% \dedicatory{This paper is dedicated to our advisors.}

\keywords{Handle decomposition, elliptic surface, knot surgery}

\begin{abstract}
In this article, we demonstrate  that for any positive integer $n$,  the knot surgery $4$-manifold $E(n)_K$ has a handle 
decomposition without $1$- and $3$-handles.  Here, $K$ represents either a fibered two-bridge knot $C(2\epsilon_1, 2\epsilon_2,\cdots, 2\epsilon_{2g})$ ($\epsilon_i \in \{ 1, -1\}$) in Conway's notation or a Stallings knot $K_m$ ($m \in \mathbb{Z}$).
\end{abstract}

\maketitle

\section{Introduction}
%%%%%%%%%%%%%%%%%%%%%%%%%%%%%%%%%%%%%%%%%%%%%%%%%%%%%%%%%%%%%%%%%%%%%%%%
%\footnote{Here is an example of a footnote. Notice that this footnote
%text is running on so that it can stand as an example of how a footnote
%with separate paragraphs should be written.
%\par
%And here is the beginning of the second paragraph.}%
%%%%%%%%%%%%%%%%%%%%%%%%%%%%%%%%%%%%%%%%%%%%%%%%%%%%%%%%%%%%%%%%%%%%%%%%

A long standing question in smooth $4$-manifold theory is whether a closed
simply connected $4$-manifold admits a handle decomposition 
without $1$-handles
or without $1$- and $3$-handles (Kirby's problem list 4.18). 
Significant progress has been made on this problem: 
Akbulut \cite{Akbulut:12, Akbulut:09} showed that the Dolgachev surface $E(1)_{2,3}$ has a handle decomposition without $1$- and $3$- handles. 
Baykur~\cite{Baykur:25} demonstrated the existence of  irreducible, closed, simply connected $4$-manifolds that admit a perfect Morse function. These manifolds have prescribed signature and spin type, provided the signature is divisible by $16$ and the Euler characteristic is sufficiently large.
Monden and Yabuguchi~\cite{Monden:25} proved that knot-surgered elliptic surfaces arising from the $(2, 2h+1)$-torus knot, which is a family of two-bridge knots, also admit handle decompositions without $1$- and $3$-handles. 

In this article, we extend these results to show that for any positive integer $n$ and fibered two-bridge knot $K$, the knot surgery $4$-manifold $E(n)_K$ has a handle decomposition without $1$- and $3$-handles. We further show that this property holds for any Stallings knot $K_m$, which is a $3$-bridge knot.

%%%%%%%%%%
%  \Section 2
%%%%%%%%%%

\section{Knot surgery \texorpdfstring{$4$}{4}-manifolds \texorpdfstring{$E(n)_K$}{E(n)K} and Lefschhetz fibration}

 \subsection{ Monodromy factorization of \texorpdfstring{$E(n)_K$}{E(n)K}}
 \label{subsection:MF}

Suppose 
 $X$ is a simply connected, smooth $4$-manifold containing an embedded torus $T$ of square $0$. 
 For any knot $K \subset S^3$,
 one can construct a new $4$-manifold,
 known as  a {\em  knot surgery $4$-manifold}, denoted $X_K$.
 This is achieved by performing a fiber sum
 \begin{equation*}
  X_K = X\sharp_{T=T_m} (S^1\times M_K)
 \end{equation*}
 along a torus $T$ in $X$ and $T_m = S^1\times m$ in
 $S^1 \times M_K$, where $M_K$ is the $3$-manifold obtained by
 $0$-framed surgery along $K$, and $m$ is the meridian of $K$.

 %%%%%%%%%%%%
 %%% Subsection
 %%%%%%%%%%%%
 
 If $X$ is a simply connected elliptic surface $E(n)$,
 $T$ is the elliptic fiber, and $K$ is a fibered knot of genus $g(K)$,
 then the knot surgery $4$-manifold $E(n)_K$ admits both
 symplectic structure and a genus $2g(K)+n-1$ Lefschetz
 fibration structure~\cite{FS:2004, Yun:2008}.
 
  \begin{figure}[tbh]
 \begin{center}
\includegraphics[scale=0.3]{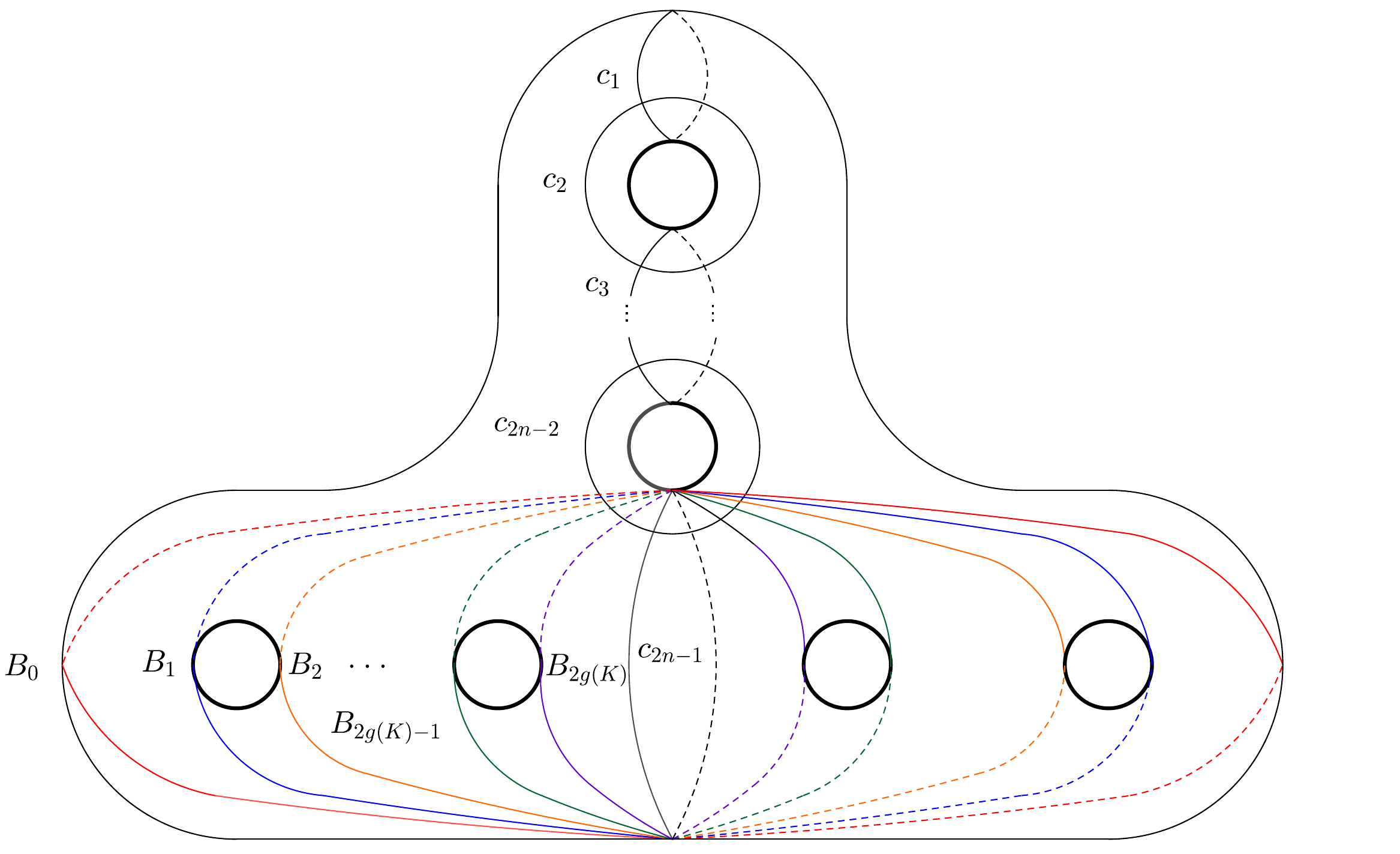}
\caption{Vanishing cycles on $E(n)_K$}
\end{center}
\end{figure}

A symplectic Lefschetz fibration over $S^2$ is characterized by its monodromy factorization, 
which is an ordered sequence of right-handed Dehn twists considered up to Hurwitz moves and simultaneous conjugation. It is known that $E(n)_K$ admits a Lefschetz fibration whose monodromy factorization is given by  
\[
\Phi_K( W^2 )\cdot W^2,
\]
where
$\phi_K : \Sigma_{g(K)}^1 \to  \Sigma_{g(K)}^1$ is the monodromy of the fibered knot $K$, defined by
\[
S^3 - \nu(K) =  ([0,1] \times  \Sigma_{g(K)}^1) / _{(1, x) \sim (0, \phi_K(x))}
\]
and
\begin{eqnarray*}
\Phi_K &=& \phi_K \oplus id \oplus id : \Sigma_{g(K)} \sharp \Sigma_{n-1} \sharp \Sigma_{g(K)}  \to \Sigma_{g(K)} \sharp \Sigma_{n-1} \sharp \Sigma_{g(K)},\\
W &=& t_{c_{2n-2}} \cdots  t_{c_2} t_{c_1} t_{c_1} t_{c_2} \cdots t_{c_{2n-2}} t_{B_0} t_{B_1}  \cdots t_{B_{2g(K)}} t_{c_{2n-1}}.
\end{eqnarray*}
The word $W$ arises from a hyperelliptic involution and  Matsumoto's word ~\cite{Matsumoto:96, Gurtas:2004, Yun:2006}.

Since $ W$ corresponds to an involution on $\Sigma_{2g(K) + n -1}$, we obtain the following equivalent relation:
\[
\Phi_K( W^2 )\cdot W^2  \sim \Phi_K( W )\cdot W \cdot W\cdot \Phi_K(W).
\]
 
Let $X_1$ be the Lefschetz fibration over the closed two-dimensional disk $D^2$ with  monodromy factorization $\Phi_K(W) \cdot W$, and let $X_2$  be the Lefschetz fibration over $D^2$ with  monodromy factorization $W \cdot \Phi_K(W)$. 
Then 
\[
E(n)_K = X_1 \cup_{\psi} X_2, \qquad \psi: \partial X_2 \to \partial X_1.
\] 
In this construction, $\psi$ is a fiber preserving and orientation reversing diffeomorphism; specifically, it is the identity map on fiber $\Sigma_{2g(K) + n -1}$ and the conjugation on $S^1 = \partial D^2$.

%%%%%%%%
%%% subsection
%%%%%%%%%

\subsection{ Handle decomposition of \texorpdfstring{$X_i$}{Xi} (\texorpdfstring{$i=1, 2$}{i=1, 2}) and \texorpdfstring{$E(n)_K$}{E(n)K}}\label{subsection:LF}

Since the surface $\Sigma_{2g(K) + n-1}$  possesses a $2$-dimensional handle decomposition consisting of one $0$-handle, $(4g(K) + 2n -2)$  $1$-handles, and one $2$-handle (as shown in the Figure~\ref{fig:Handle_fiber}), the product 
\[
\Sigma_{2g(K) + n -1} \times   D^2 
\]
admits a handle decomposition comprising one $4$-dimensional $0$-handle, $(4g(K) + 2n -2)$  $4$-dimensional $1$-handles, and one $4$-dimensional $2$-handle with framing $0$. 
Subsequently,  we attach $4(g(K) + 2n -1)$  $4$-dimensional $2$-handles, one for each right-handed Dehn twist in the monodromy factorization $\Phi_K(W) \cdot W$. 
These $2$-handles are attached sequentially, starting from the rightmost twist. 
For each Dehn twist $t_c$, the attaching circle $c$ is located on a disjoint fiber surface, and the framing is defined as one less than the surface framing. 

Consequently, each manifold $X_i$ (for $i=1, 2$) possesses a handle decomposition consisting of
\begin{itemize}
\item one $4$-dimensional $0$-handle, 
\item $(4g(K) + 2n -2)$ $4$-dimensional $1$-handles, and 
\item $(4g(K) + 8n -3)$ $4$-dimensional $2$-handles.
\end{itemize} 
Since $E(n)_K$ decomposes as $X_1 \cup_{\psi} X_2$, its handle decomposition is formed by taking the handle structure of $X_1$ and attaching the upside-down handlebody of $X_2$ via the boundary diffeomorphism.

%%%%%%%%%
%%% subsection
%%%%%%%%%%

\subsection{Fibered two-bridge knots and their monodromy}

Assume that $p$ and $q$ are relatively prime integers with $p$ being odd.
We consider a two-bridge knot $b(p,q)$ defined as follows:

\begin{figure}[htb]
\begin{center}
\includegraphics[scale=0.21]{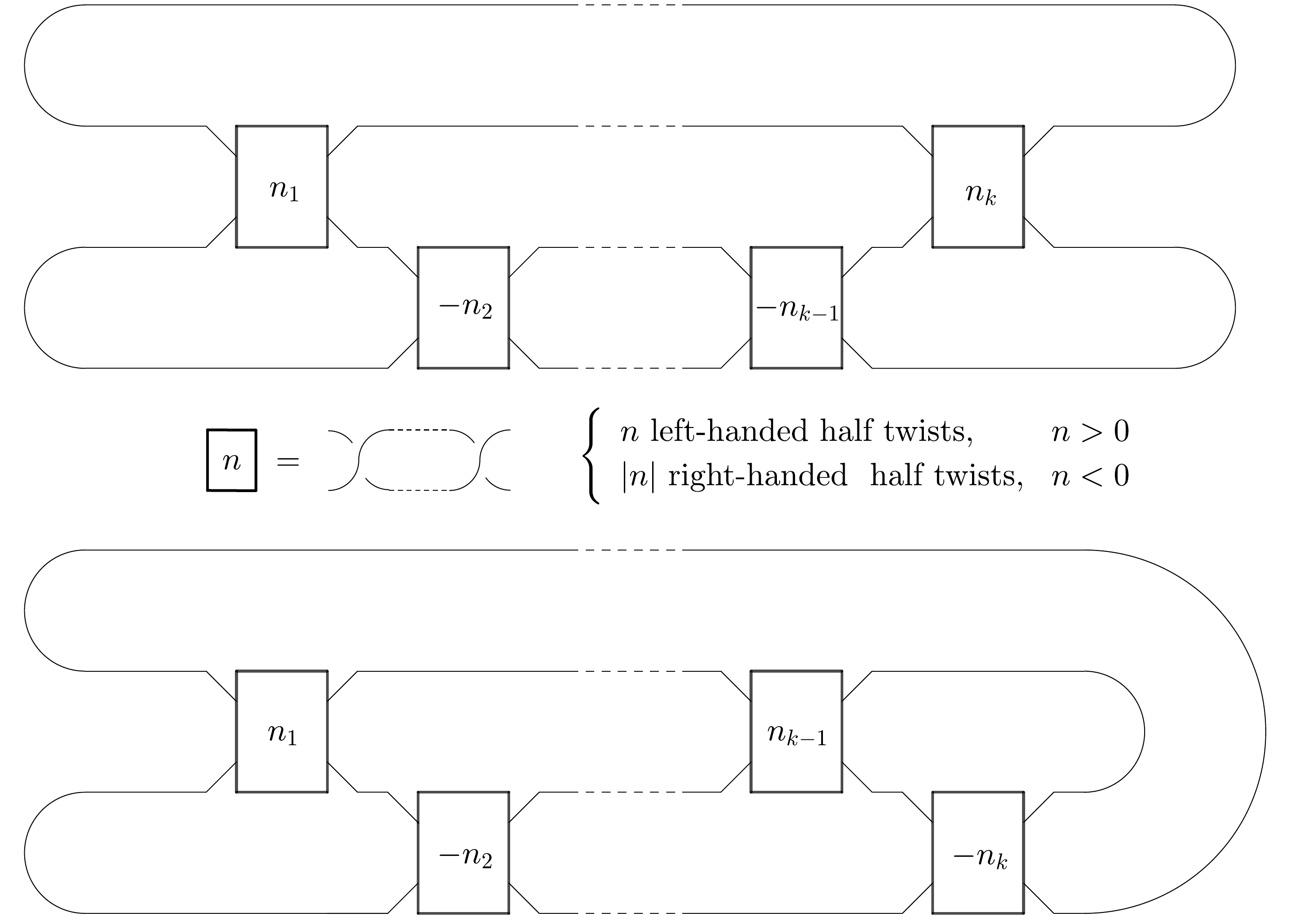}
\end{center}
\caption{A $2$-bridge knot  $C(n_1, n_2, \cdots,  n_k)$ (for $k$ odd and $k$ even, respectively)}\label{fig:2bridge}
\end{figure}

\begin{definition}[\cite{BZ:03}]
\label{Definition:2bridge}
A \emph{two-bridge link} $b(p,q)$ has the Conway's normal form
\[ C(n_1, n_2,  \cdots, n_k)\]
as in Figure~\ref{fig:2bridge}, where
\[
\frac{p}{q} = n_1+\frac{1}{n_2+\frac{1}{\ddots
\frac{1}{n_{k-1} + \frac{1}{n_k}}}} =[n_1, n_2, \cdots, n_k].
\]
\noindent
This knot is homotopic to a $4$-plat defined by the braid:
\[
\begin{cases}
\sigma_2^{n_1} \sigma_1^{-n_2} \sigma_2^{n_3} \sigma_1^{-n_4} \cdots
\sigma_1^{-n_k + \epsilon} \sigma_2^\epsilon, & {\textrm{if $k$ is even  ($\epsilon \in \{ +1, -1\}$) }}, \\
\sigma_2^{n_1} \sigma_1^{-n_2} \sigma_2^{n_3} \sigma_1^{-n_4} \cdots  \sigma_2^{n_k}, & {\textrm{if $k$ is\, odd}}.
\end{cases}
\]
Here,  $\sigma_i$ represents the standard braid generator.
 We denote $C(2m_1, -2m_2, \cdots, \allowbreak 2m_{2k-1}, -2m_{2k})$ by
$D(m_1, m_2, \cdots, m_{2k-1},  m_{2k})$.
\end{definition}

\smallskip

\begin{figure}[htb]
\label{fig:Curves}
\begin{center}
\includegraphics[scale=0.2]{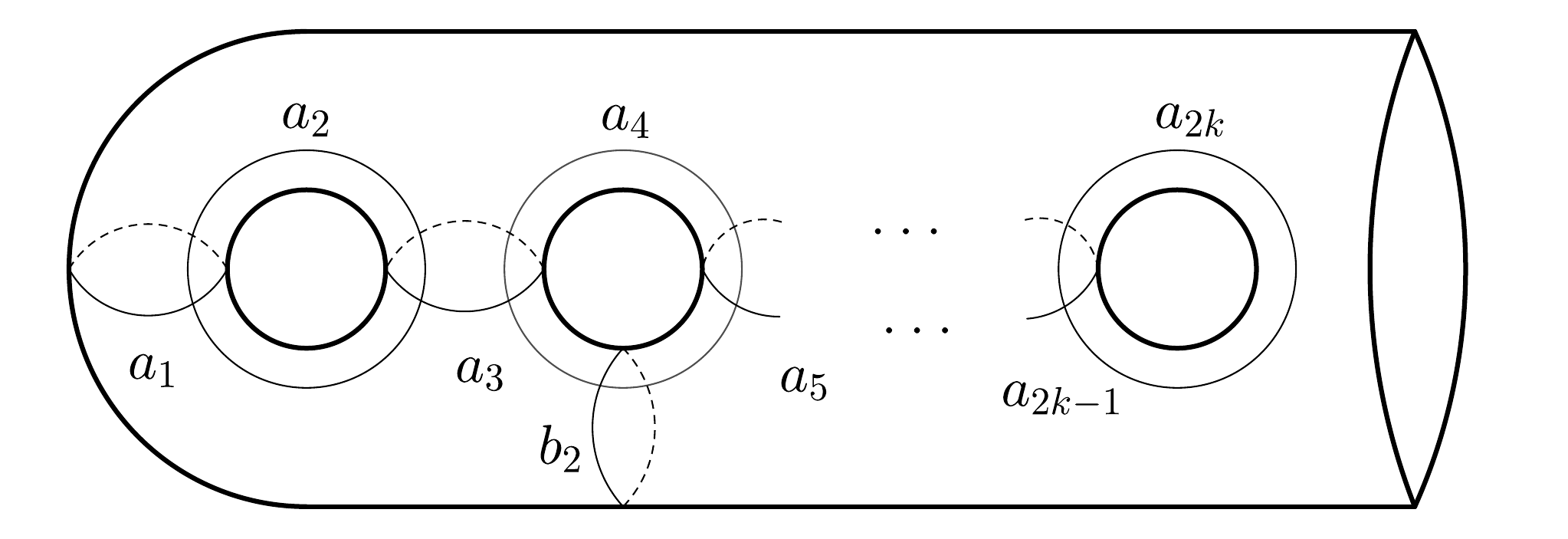}
\end{center}
\caption{Curves for $\phi_{D(\epsilon_1, \epsilon_2,\cdots, \epsilon_{2k})}$}\label{fig:2bridge_monodromy}
\end{figure}

\begin{lemma}[\cite{BZ:03, Kawauchi:1996}]
\label{Lemma:2-bridge}
\begin{itemize}
\item[(a)] Two two-bridge knots $b(p,q)$ and  $b(p',q')$ are equivalent if and only if
$p=p'$ and either $q=q'$ or $qq' \equiv 1  \pmod{p}$.
\item[(b)] The knot $D(n_1, n_2, \cdots, n_{2k})$ is a fibered knot if and only if
each $n_i$ is $1$ or $-1$.
\item[(c)] Two two-bridge knots $D(a_1, a_2, \cdots, a_{2k})$ and $D(b_1, b_2, \cdots, b_{2l})$
are ambient isotopic  if and only if $k=l$ and $a_i =b_i$ or $a_i =
b_{2k+1-i}$ for all $1\le i \le 2k$.
\item[(d)] The knot $K = D(\epsilon_1, \epsilon_2, \cdots, \epsilon_{2k})$, where each $\epsilon_i \in \{ +1, -1\}$, has the monodromy
\[
\phi_K=t_{a_{2k}}^{\epsilon_{2k}} \circ \cdots \circ t_{a_2}^{\epsilon_2} \circ t_{a_1}^{\epsilon_1}.
\]
\end{itemize}
\end{lemma}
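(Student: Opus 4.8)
The plan is to treat parts (a)--(d) as consequences of classical two-bridge theory, assembling them so that the continued-fraction bookkeeping matches the $D$-notation used here. For part (a), I would invoke Schubert's classification, whose key geometric input is that the double branched cover of $S^3$ along $b(p,q)$ is the lens space $L(p,q)$; thus two two-bridge knots are equivalent precisely when their lens-space covers are (orientation-preservingly) homeomorphic. The classification of lens spaces via Reidemeister torsion gives $L(p,q)\cong L(p',q')$ iff $p=p'$ and $q'\equiv q^{\pm 1}\pmod p$, and translating the two cases $q'\equiv q$ and $qq'\equiv 1$ back to knots yields the stated criterion.

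For part (b), I would use the fact that the Seifert surface read off from an \emph{even} continued-fraction expansion $C(2m_1,-2m_2,\dots)$ is a linear plumbing of bands, one band per entry, and that each band is a (once-twisted) Hopf band exactly when the corresponding entry is $\pm 2$, i.e. when the $D$-entry is $\pm 1$. Since a plumbing (Murasugi sum) of fiber surfaces is again a fiber surface by Stallings' theorem, $D(n_1,\dots,n_{2k})$ with all $n_i=\pm1$ is fibered. For the converse I would compute the leading coefficient of the Alexander polynomial from the even expansion: it is $\pm 1$ if and only if every $|m_i|=1$, so when some entry has $|2m_i|>2$ the knot fails the monic-Alexander test and cannot be fibered.

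Part (c) then follows by combining (a) with continued-fraction arithmetic. The genus of $D(a_1,\dots,a_{2k})$ equals $k$, so the length $2k$ is an isotopy invariant and $k=l$. Writing $p/q=[2a_1,-2a_2,\dots]$, I would check that reversing the entries sends $[c_1,\dots,c_n]\mapsto[c_n,\dots,c_1]$, which (after absorbing the sign into the choice of representative) realizes exactly the second equivalence $qq'\equiv 1\pmod p$ of part (a). Since by (a) the only equivalences are $q=q'$ and $qq'\equiv 1$, the two possibilities $a_i=b_i$ for all $i$ and $a_i=b_{2k+1-i}$ for all $i$ are forced.

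Finally, for part (d) I would identify the fiber surface of $K=D(\epsilon_1,\dots,\epsilon_{2k})$ with the linear plumbing of Hopf bands whose cores form the chain $a_1,\dots,a_{2k}$ of Figure~\ref{fig:2bridge_monodromy}, where $a_i$ is a positive or negative Hopf-band core according to $\epsilon_i$ and consecutive cores meet once. The monodromy of such a plumbing is the ordered product of Dehn twists along the cores, each twist right- or left-handed according to the sign of its band, giving $\phi_K=t_{a_{2k}}^{\epsilon_{2k}}\circ\cdots\circ t_{a_1}^{\epsilon_1}$. I expect the main obstacle to be this last part: one must pin down the geometric model of the fiber surface so that its cores are precisely the curves drawn in Figure~\ref{fig:2bridge_monodromy}, and then verify that the ordering of the twists and the individual signs agree with the chosen orientation conventions, since sign and ordering errors in the monodromy are easy to make and would propagate into the Lefschetz-fibration computation on which the rest of the paper depends.
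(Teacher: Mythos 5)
Your proposal is correct and follows essentially the same route as the paper: the paper likewise disposes of (a)--(c) as classical facts (via the classification of lens spaces, which your double-branched-cover, Alexander-polynomial, and continued-fraction-reversal sketches merely flesh out in standard detail), and its proof of (d) is exactly your plumbing argument---the fiber surface of $D(\epsilon_1,\dots,\epsilon_{2k})$ is a linear plumbing of Hopf bands, each positive band contributing $t_{a_i}$ and each negative band $t_{a_i}^{-1}$ along the core curves of Figure~\ref{fig:2bridge_monodromy}. Your closing caution about ordering and sign conventions is also echoed in the paper, which notes that its $\phi_K$ is defined with the direction opposite to Harer's convention.
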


\begin{proof}
The peoperties (a), (b) and (c) are well known facts from the classification of Lens spaces.
The Seifert surface of $D(\epsilon_1, \epsilon_2, \cdots, \epsilon_{2k})$ is obtained by plumbing a sequence of positive Hopf band for $\epsilon_i = +1$ and negative Hopf band for $\epsilon_i = -1$. Note that a positive Hopf band is the Seifert surface of a positive Hopf link.

Each positive Hopf band contributes a Dehn twist $t_c$, and negative Hopf band contributes a Dehn twist $t_c^{-1}$, where $c$ is the core circle of the corresponding Hopf band.
Therefore, we obtain the monodromy: 
\[
\phi_K=t_{a_{2k}}^{\epsilon_{2k}} \circ \cdots \circ t_{a_2}^{\epsilon_2} \circ t_{a_1}^{\epsilon_1}
\]
where $a_i$ is the simple closed curve on $\Sigma_{g(K)}^1$ as in Figure~\ref{fig:2bridge_monodromy}.
Recall that $\phi_K$ is defined by   
\[
S^3 - \nu(K) = ([0,1] \times \Sigma_{g(K)}^1)/ _{(1,x) \sim (0, \phi_K(x))}
\]
which  considers the direction opposite to Harer's result ~\cite{Harer:82}.
\end{proof}

%%%%%%%%%%%%
%%%
%%%. Section
%%%
%%%%%%%%%%%%%

\section{Fibered two-bridge knot \texorpdfstring{$K$}{K} and handle decomposition of \texorpdfstring{$E(n)_K$}{E(n)K}}

In this section, we identify $(4g(K) + 2n -2)$ $1$-handle/$2$-handle canceling pairs in the handle decomposition of $X_i$.  
To locate these canceling pairs, we first establish an algorithm to analyze the vanishing cycles in the monodromy factorization $\Phi_K(W) \cdot W$. Each vanishing cycle corresponds to  a curve on $\Sigma_{2g(K) + n-1}^1$, the surface obtained from a $2$-dimensional $0$-handle by attaching $(4g(K) + 2n -2)$  $2$-dimensional $1$-handles.

\begin{figure}[tbh]
\begin{tabular}{ll}
\includegraphics[scale=0.19]{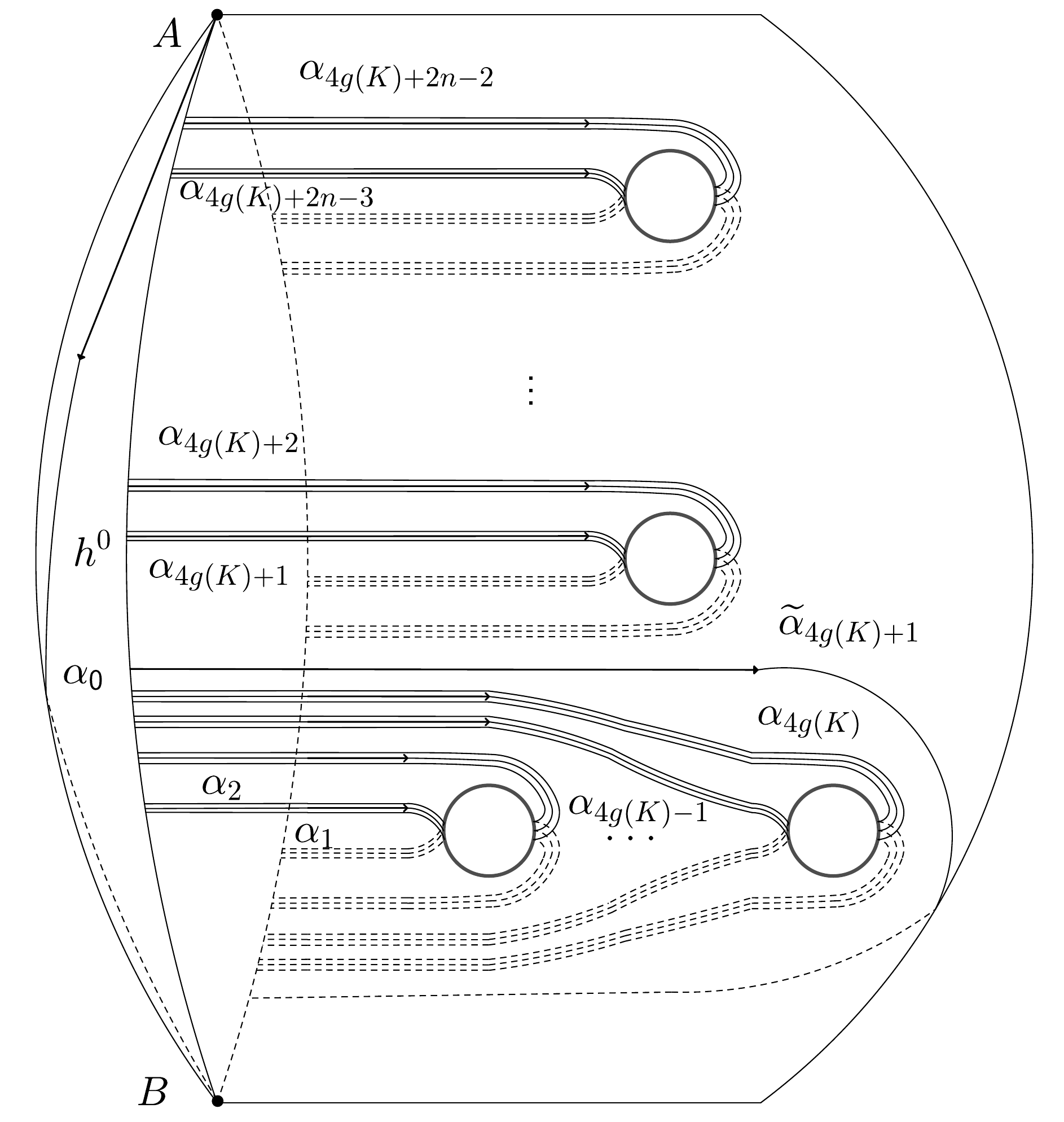} &
\includegraphics[scale=0.21]{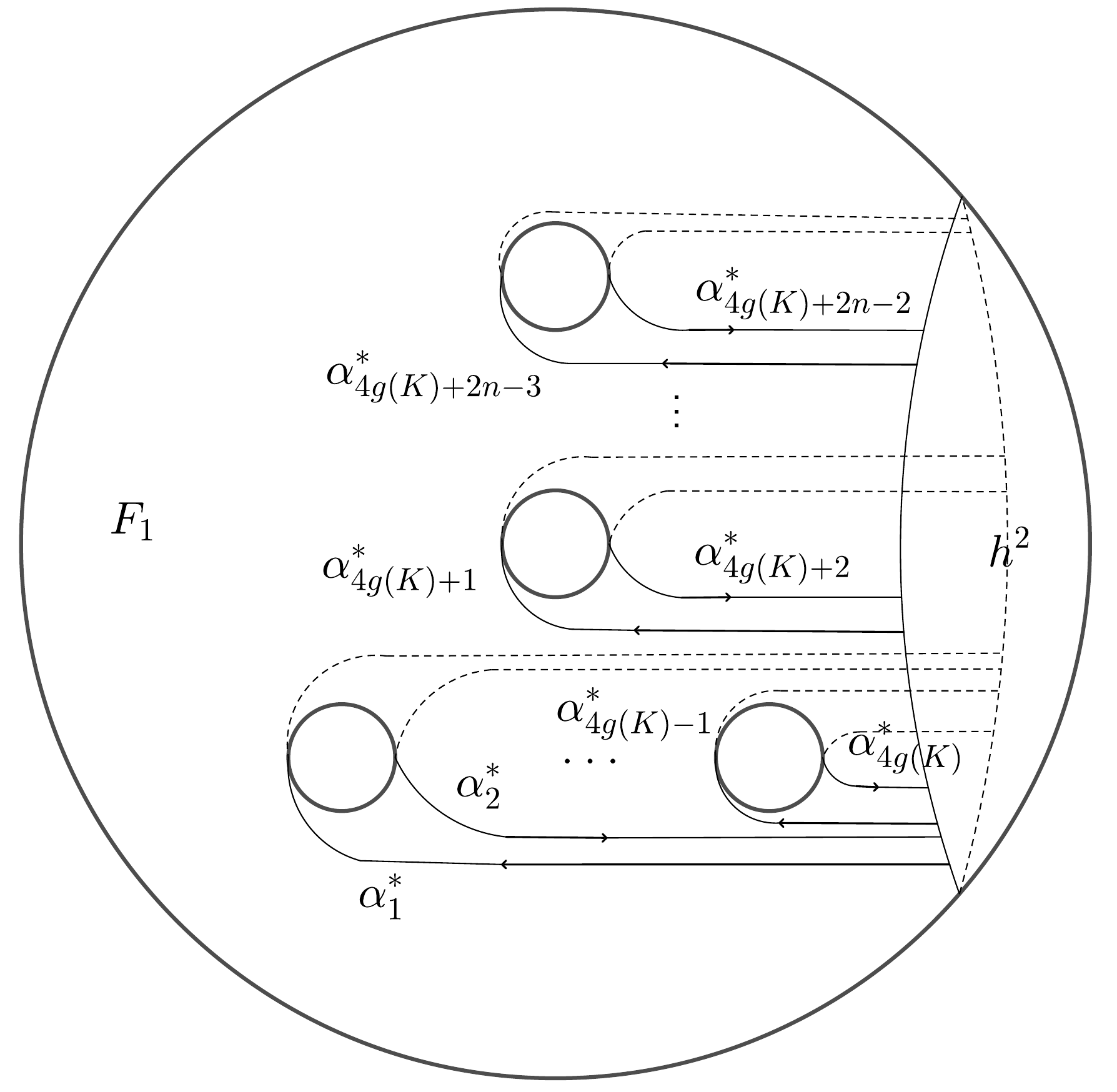}
\end{tabular}
\caption{Handle decomposition of  $\Sigma_{2g(K) + n-1}^1$: core $\alpha_i$ and co-core $\alpha_i^*$ of each $2$-dimensional $1$-handle (ordered set of oriented arcs  $\{ \alpha_i, \alpha_i^*\}$ give the positive orientation of the fiber surface.)}\label{fig:Handle_fiber}
\end{figure}

\begin{definition}
Let $F = \Sigma_{2g(K) + n-1}$ be the fixed fiber surface of Lefschetz fibration $X_i$ (subsection~\ref{subsection:LF}) and let $h^0$ be the $2$-dimensional $0$-handle, $h_i^1 = \alpha_i \times I$  be the $2$-dimensional $1$-handle with oriented core $\alpha_i$ and oriented co-core $\alpha_i^*$  for each $i = 1, 2, \cdots, 4g(K) + 2n -2$ as shown in Figure~\ref{fig:Handle_fiber}. 
We will use the same notation $\alpha_i^*$ for the $4$-dimensional $1$-handle $h_i^1 \times D^2 = \alpha_i \times (I \times D^2)$.
\end{definition}

\begin{definition}
Given three oriented paths $\alpha$ , $\beta$ and $\delta$ with end points on $h^0$, we define
\[ \delta \simeq \alpha * \beta\]
if the path $\alpha* \beta$ (obtained by identifying the terminal point of $\alpha$ and the starting point of $\beta$) is homotopic to $\delta$ relative to the boundary in $F_1 \times D^2$, where $F_1 = h^0\cup (\cup_{i=1}^{4g(K) + 2n -2} h_i^1)$. 
Collapsing the $4$-dimensional $0$-handle $h^0 \times D^2$ to a point $(b_0, \mathbb{O})$ implies $[\delta] = [\alpha]* [\beta]$ in $\pi_1(F_1\times D^2, (b_0, \mathbb{O}))$.
\end{definition}

\begin{figure}[tbh]
\begin{center}
\includegraphics[scale=0.25]{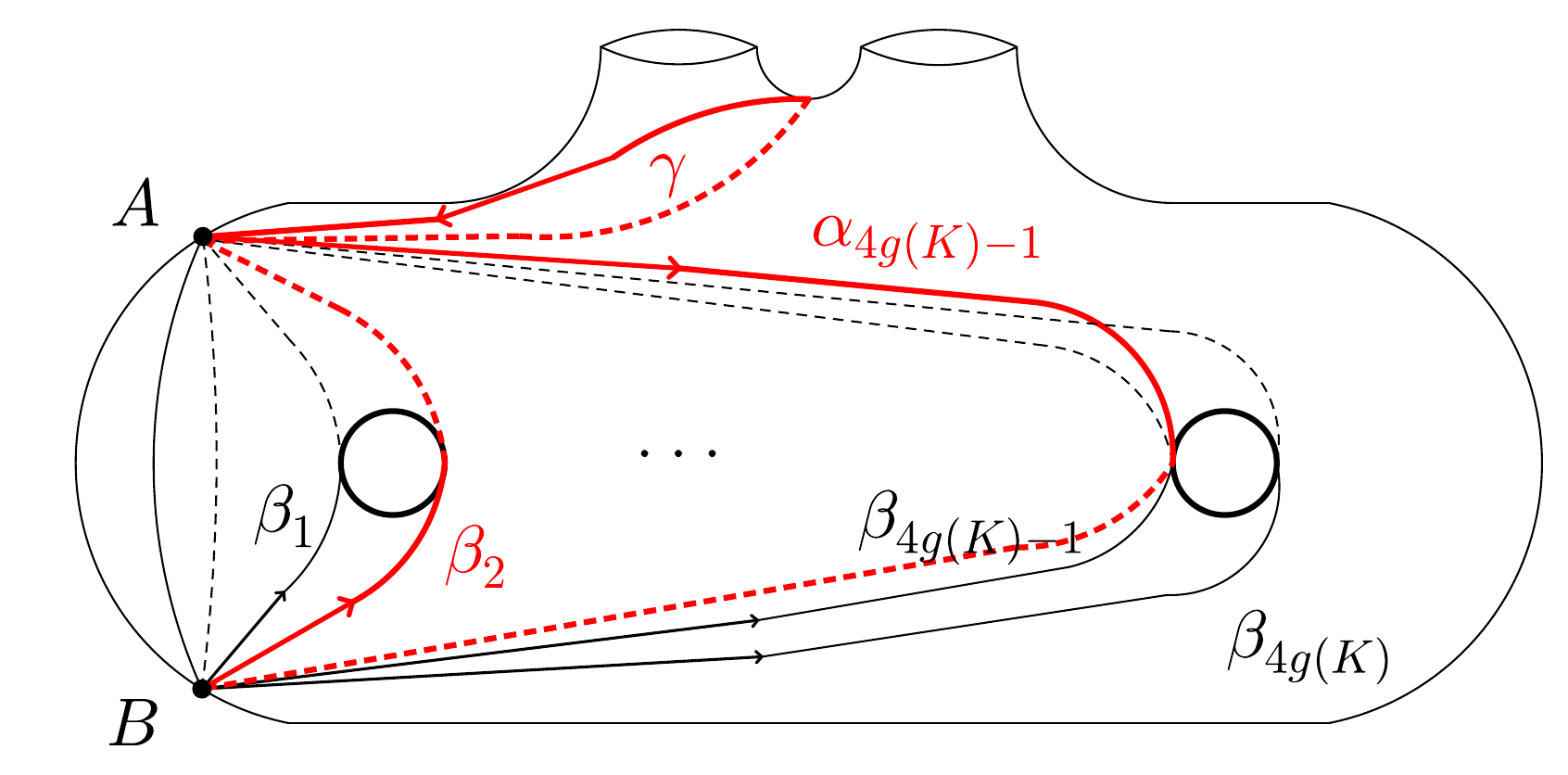}
\end{center}
\caption{Decomposition of  $B_i$ on $\Sigma_{2g(K) + n-1}$ (simple closed curve in red color is a decomposition of $B_2 \simeq \beta_2*\gamma*\alpha_{4g(K)-1}$) }\label{fig:B_i}
\end{figure}

\begin{remark}
Although the oriented paths $\alpha_0$ and $\widetilde{\alpha}_{4g(K)+1}$ in Figure~\ref{fig:Handle_fiber} are not cores of any $2$-dimensional $1$-handle, we adopt this notation by convention. We define $\beta_0 = \alpha_0^{-1}$ as the reversed path of $\alpha_0$.  
We consider $\beta_i$ and $\gamma$ as oriented paths  in Figure~\ref{fig:B_i}.
For simplicity, we assume that each path $\alpha_i$  flows from $A$ to $B$, each path $\beta_i$  flows from $B$ to $A$, and path $\gamma$ flows from $A$ to $A$. $\alpha^{-1}$ denotes the reversed path of $\alpha$.
\end{remark}

\begin{remark}
When we draw full Kirby diagram, we must carefully trace all attaching spheres of $4$-dimensional $2$-handles in $F_1 \times D^2$  together with framing for each $2$-handle. We can assume that attaching sphere is not twisted on any $4$-dimensional $1$-handle and all twisting information is located in $4$-dimensional $0$-handle. 
However,  to simply identify $1$-handle/$2$-handle canceling pairs, it suffices to consider the homotopy type of each vanishing cycle in 
$\pi_1(F_1\times D^2, (b_0, \mathbb{O}))$.  
\end{remark}

\subsection{\texorpdfstring{$E(1)_K$}{E(n)K} case}

\begin{lemma}
\label{Lemma:B_i}
In the monodromy factorization of $E(1)_K$ where $K$ is a fibered knot of genus $g(K)$,
 $B_i$ is homotopic to  $\beta_i * \alpha_{4g(K) + 1 -i} $ for each $i=1, 2, \cdots, 2g(K)$. Specifically:
%\[
%\beta_i \simeq  \alpha_0^{-1}*\alpha_1 * \alpha_2^{-1} * \cdots * \alpha_{i-1}^{(-1)^i}* \alpha_i^{(-1)^{i+1}}*  \alpha_{i-1}^{(-1)^i}* \cdots * \alpha_2^{-1}*\alpha_1*\alpha_0^{-1}.
%\]
%Therefore
\[
B_0 \simeq \beta_0*\widetilde{\alpha}_{4g(K) +1} \simeq  \alpha_0^{-1}*\alpha_{4g(K)}*\alpha_{4g(K)-1}^{-1} * \cdots * \alpha_{2} * \alpha_{1}^{-1}*\alpha_0
\]
and 
\[
B_i \simeq  \alpha_0^{-1}* \alpha_1 *  \cdots * \alpha_{i-1}^{(-1)^i}* \alpha_i^{(-1)^{i+1}}*  \alpha_{i-1}^{(-1)^i}* \cdots *\alpha_1*\alpha_0^{-1} *\alpha_{4g(K) + 1 -i}
\]
for each $i=1, 2,  \cdots, 2g(K)$.
\end{lemma}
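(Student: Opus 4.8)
The statement is a concrete computation of homotopy classes, so the plan is to isotope each vanishing cycle $B_i$ into the thickened $1$-skeleton $F_1\times D^2$ and read off, in cyclic order and with sign, the sequence of $1$-handles it runs through; each passage through $h_j^1$ contributes a factor $\alpha_j^{\pm1}$, the sign recording the direction of traversal relative to the orientation of $\alpha_j$, and after collapsing $h^0\times D^2$ this produces the asserted word in $\pi_1(F_1\times D^2,(b_0,\mathbb{O}))$. Write $g=g(K)$ for brevity. For $n=1$ the monodromy word specializes to $W=t_{B_0}t_{B_1}\cdots t_{B_{2g}}t_{c_1}$ on $\Sigma_{2g}$, and the curves $B_0,\dots,B_{2g}$ are precisely the vanishing cycles of the generalized Matsumoto word drawn on the fiber. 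The first task is therefore to superimpose that picture on the handle decomposition of Figure~\ref{fig:Handle_fiber}, matching the marked points $A,B$ and the core arcs $\alpha_j$, so that tracing the curve reduces to bookkeeping of its crossings.

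I would handle $B_0$ first and separately: it is the single \emph{long} curve that sweeps across all the handles, and a direct trace yields the alternating word $\alpha_0^{-1}*\alpha_{4g}*\alpha_{4g-1}^{-1}*\cdots*\alpha_1^{-1}*\alpha_0$, that is, $\beta_0*\widetilde\alpha_{4g+1}$ with $\beta_0=\alpha_0^{-1}$. For the remaining $B_i$ I would argue by induction on $i\ge1$, the base case $B_1$ being read off directly from the figure. For the inductive step I would show that $B_i$ is obtained from $B_{i-1}$ by pushing the \emph{turn-around} one handle deeper: the central letter $\alpha_{i-1}^{(-1)^{i}}$ of $\beta_{i-1}$ is replaced by $\alpha_{i-1}^{(-1)^{i}}*\alpha_i^{(-1)^{i+1}}*\alpha_{i-1}^{(-1)^{i}}$, while the single tail arc shifts from $\alpha_{4g+2-i}$ to $\alpha_{4g+1-i}$. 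This exhibits $B_i\simeq\beta_i*\alpha_{4g+1-i}$ with $\beta_i$ the palindrome $\alpha_0^{-1}*\alpha_1*\cdots*\alpha_i^{(-1)^{i+1}}*\cdots*\alpha_1*\alpha_0^{-1}$; the palindromic shape is forced by the reflection symmetry of the curve, and the case $i=2$ is exactly the decomposition $B_2\simeq\beta_2*\gamma*\alpha_{4g-1}$ illustrated in Figure~\ref{fig:B_i}.

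The delicate part is the orientation bookkeeping that produces the alternating exponents $\alpha_j^{(-1)^{j+1}}$: I expect the sign to flip at each step because consecutive core arcs are crossed in opposite directions as the curve spirals inward, and getting the parity exactly right—hence the distinction between $(-1)^{i}$ and $(-1)^{i+1}$—is where the argument must be most careful. The other point requiring justification, though it is already built into the preceding definitions, is that the whole homotopy takes place inside $F_1\times D^2$, so that neither the $2$-handle of the fiber surface nor any twisting in $h^0\times D^2$ contributes; this is what legitimizes replacing the isotopy class of $B_i$ by a product of the designated $1$-handle generators.
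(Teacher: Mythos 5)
Your proposal is correct and takes essentially the same approach as the paper: the paper also reduces the claim to tracing each $B_i$ through the handle decomposition of Figure~\ref{fig:Handle_fiber}, noting that for $n=1$ the arc $\gamma$ collapses to a point so $B_i\simeq\beta_i*\alpha_{4g(K)+1-i}$, and then recording signed crossings of $\beta_i$ and $\widetilde{\alpha}_{4g(K)+1}$ with the co-cores $\alpha_j^*$ (with the convention of appending $\alpha_0^{-1}$ when $\beta_i$ starts or ends at a positive letter, which your words correctly incorporate). The only difference is organizational: the paper reads off the alternating signs $(-1)^{j+1}$ for all $i$ at once from the figure, whereas you package the same figure-tracing as an induction that pushes the turn-around one handle deeper, which is a harmless repackaging since the inductive step is verified by the identical crossing count.
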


\begin{proof}
In the case of $E(1)_K$, each $\gamma$ is homotopic to a point. 
Thus, the oriented loop $B_i$ is homotopic to $\beta_i*\alpha_{2g(K) + 1 -i}$ as shown in Figure~\ref{fig:B_i}. 

Each oriented path $\beta_i$ defines a word in the letters $\{\alpha_1, \alpha_2, \cdots, \alpha_{4g(K) + 2n -2}\}$ as follows:  Traversing the oriented path  $\beta_i$, if we cross $\alpha_{j}^*$ from left to right  (relative to the surface orientation), we append the letter $\alpha_j$.  
Conversely, crossing from right to left contributes $\alpha_j^{-1}$. 
Since $\beta_i$ flows from $B$ to $A$, and we require the word to be homotopic relative to the boundary, we append $\alpha_0^{-1}$ to the start or end if the first  or last letter is positive.

%In this way we can read an ordered sequence of letters for $\beta_i$. 
%
%For example, $\beta_2$ meets $\alpha_1^*$ positively and then meets $\alpha_2^*$ negatively and lastly meets $\alpha_1^* $ positively, so we get a word $\alpha_1* \alpha_2^{-1} * \alpha_1$. Since the first and the last letter is a positive letter, we put $\alpha_0^{-1}$ to the leftmost and rightmost position and we get $\alpha_0^{-1}*\alpha_1* \alpha_2^{-1} * \alpha_1*\alpha_0^{-1}$ which is homotopic to $\beta_2$ relative boundary.

Specifically, $\beta_i$ meets $\alpha_1^*, \alpha_2^*, \cdots, \alpha_i^*$ with the sign of $\alpha_j$ being $(-1)^{j+1}$ ($j=1, 2, \cdots, i$), and then meets $\alpha_{i-1}^*, \alpha_{i-2}^*, \cdots, \alpha_1^*$ with sign of $\alpha_j$ is  $(-1)^{j+1}$ ($j=i-1, i-2, \cdots, 1$).
Therefore
\[
\beta_i \simeq \alpha_0^{-1} *\alpha_1*\cdots *\alpha_{i-1}^{(-1)^i}*\alpha_i^{(-1)^{i+1}}*\alpha_{i-1}^{(-1)^{i}}*\cdots *\alpha_1*\alpha_0^{-1}
\]
for each $i=1, 2, \cdots, 4g(K)$. 
Similarly, since $\widetilde{\alpha}_{4g(K)+1}$ meets $\alpha_{4g(K)}^*, \alpha_{4g(K)-1}^*, \cdots, \alpha_1^*$ with the sign $(-1)^{j}$ ($j=4g(K), 4g(K)-1, \cdots, 1$),
\[
\widetilde{\alpha}_{4g(K) +1} \simeq \alpha_{4g(K)}*\alpha_{4g(K)-1}^{-1}* \cdots * \alpha_1^{-1}*\alpha_0.
\]
These yield the stated formula.
\end{proof}

\begin{figure}[tbh]
\begin{center}
\includegraphics[scale=0.3]{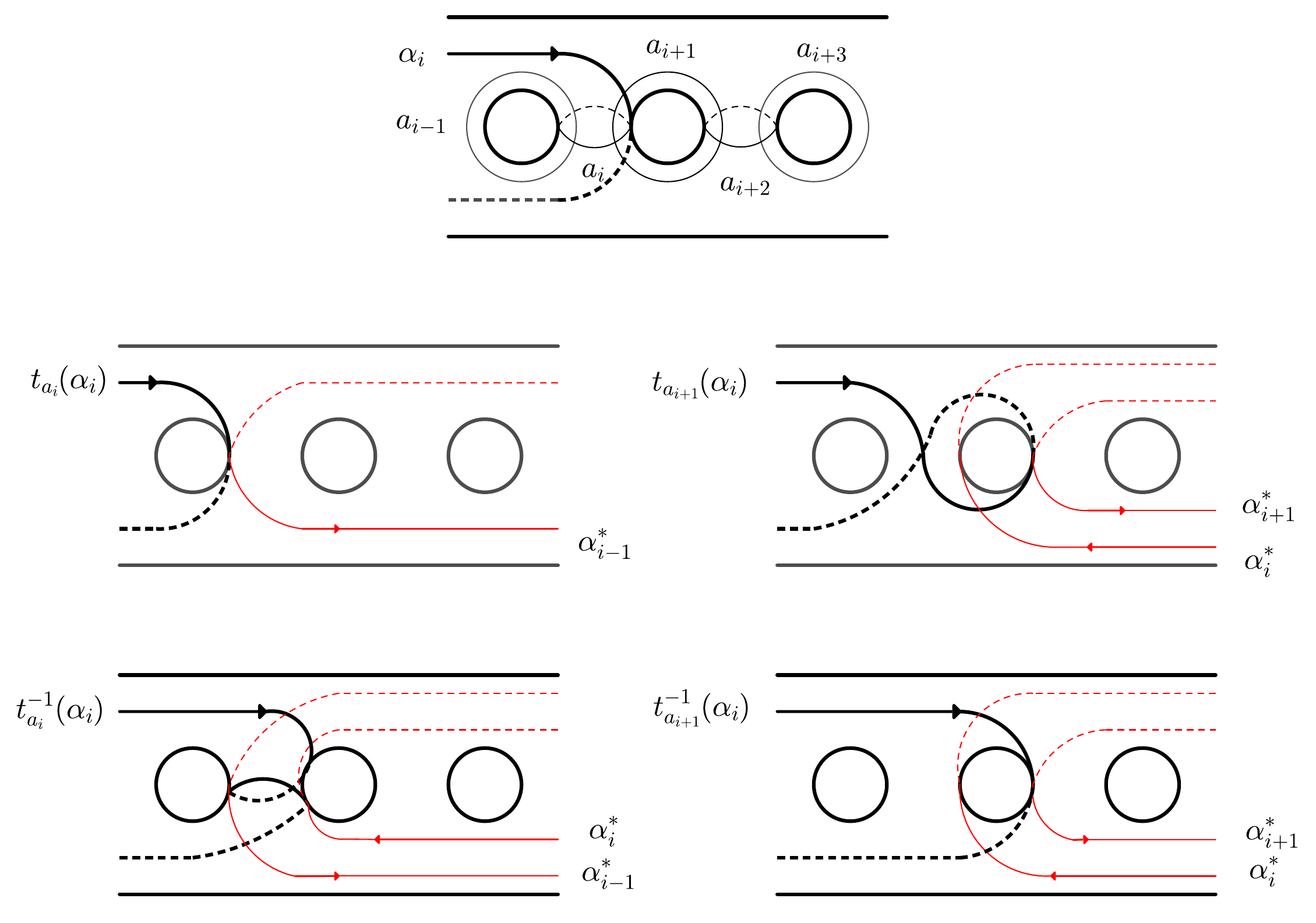}
\end{center}
\caption{word for $t_{a_j}^{\pm 1}(\alpha_i)$ }\label{fig:DehnTwist}
\end{figure}

\begin{lemma}
\label{Lemma:twist}
The following homotopy relations relative to the boundary hold:
\begin{itemize}
\item[(a)] $t_{a_{i+1}} (\alpha_i) \simeq \alpha_i * \alpha_{i+1}^{-1}* \alpha_i$ for each $i=0, 1, \cdots, 2g(K) -1$.
\item[(b)] $t_{a_{i+1}}^{-1}(\alpha_i) \simeq \alpha_{i+1}$ for each $i=0, 1, \cdots, 2g(K) -1$.
\item[(c)] $t_{a_i} (\alpha_i) \simeq \alpha_{i-1}$ for each $i=1, 2, \cdots, 2g(K)$.
\item[(d)] $t_{a_i}^{-1}(\alpha_i) \simeq \alpha_i * \alpha_{i-1}^{-1}*\alpha_i$ for each $i=1, 2, \cdots, 2g(K)$.
\item[(e)] if $j \ne i, i+1$, then $t_{a_j}(\alpha_i) = \alpha_i$.
\end{itemize}
\end{lemma}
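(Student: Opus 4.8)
\emph{Proof proposal.} The plan is to reduce all five relations to a single local picture at each $2$-dimensional $1$-handle and then apply the standard surgery description of a Dehn twist acting on a transverse arc. First I would read off from Figures~\ref{fig:2bridge_monodromy} and~\ref{fig:Handle_fiber} the geometric intersection pattern between the chain of curves $\{a_j\}$ coming from the plumbed Hopf bands and the $1$-handle cores $\{\alpha_i\}$: namely that $\alpha_i$ meets $a_j$ transversally in a single point precisely when $j=i$ or $j=i+1$, and is disjoint from every other $a_j$. The disjointness case is immediate and settles part (e), since a Dehn twist is supported in a regular neighborhood of its curve and therefore fixes, rel boundary, any arc it does not meet.

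For the remaining four relations I would use the elementary splicing rule, illustrated in Figure~\ref{fig:DehnTwist}: if an oriented arc $\delta$ crosses a simple closed curve $c$ once and we split $\delta = \delta_1 * \delta_2$ at the crossing point $p$, then $t_c(\delta) \simeq \delta_1 * c_p * \delta_2$ and $t_c^{-1}(\delta) \simeq \delta_1 * c_p^{-1} * \delta_2$, where $c_p$ is $c$ traversed once based at $p$, oriented by the right-handed convention and the fixed orientation of the fiber. The two data I then need are the location of each crossing along $\alpha_i$ and the based-loop class of the relevant curve in $\pi_1(F_1\times D^2,(b_0,\mathbb{O}))$. From the figures the crossing of $\alpha_i$ with $a_i$ lies near the endpoint $A$, so $\delta_1$ is null-homotopic rel boundary and $\delta_2 \simeq \alpha_i$, while the crossing with $a_{i+1}$ lies near $B$, so $\delta_1 \simeq \alpha_i$ and $\delta_2$ is null-homotopic; reading the based loops off the handle picture gives $a_i \simeq \alpha_{i-1} * \alpha_i^{-1}$ and $a_{i+1} \simeq \alpha_{i+1}^{-1} * \alpha_i$. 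As a consistency check these representatives give $[a_i] = [\alpha_{i-1}] - [\alpha_i]$ in $H_1$, so adjacent curves $a_i, a_{i+1}$ share the generator $\alpha_i$ (geometric intersection one) while non-adjacent curves have disjoint support (intersection zero), matching the chain pattern used in the first step.

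Substituting these into the splicing rule produces all four identities by direct cancellation. For the twist along $a_i$ one gets $t_{a_i}(\alpha_i) \simeq (\alpha_{i-1} * \alpha_i^{-1}) * \alpha_i \simeq \alpha_{i-1}$, giving (c), and $t_{a_i}^{-1}(\alpha_i) \simeq (\alpha_{i-1} * \alpha_i^{-1})^{-1} * \alpha_i \simeq \alpha_i * \alpha_{i-1}^{-1} * \alpha_i$, giving (d); for the twist along $a_{i+1}$ one gets $t_{a_{i+1}}(\alpha_i) \simeq \alpha_i * (\alpha_{i+1}^{-1} * \alpha_i) \simeq \alpha_i * \alpha_{i+1}^{-1} * \alpha_i$, giving (a), and $t_{a_{i+1}}^{-1}(\alpha_i) \simeq \alpha_i * (\alpha_{i+1}^{-1} * \alpha_i)^{-1} \simeq \alpha_{i+1}$, giving (b). The asymmetry between the single-arc answers (b), (c) and the length-three answers (a), (d) is exactly the difference between the twist sign that unwinds the crossing and the one that accumulates a full copy of the curve.

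The step I expect to be the main obstacle is the bookkeeping behind the two geometric inputs of the second paragraph: fixing the orientations of the $a_j$ and the $\alpha_i$ consistently with the positive fiber orientation declared in Figure~\ref{fig:Handle_fiber}, and confirming that the two crossings sit on opposite ends of $\alpha_i$ with the based-loop classes $\alpha_{i-1} * \alpha_i^{-1}$ and $\alpha_{i+1}^{-1} * \alpha_i$ rather than their inverses. Once these conventions are pinned down the four computations are purely formal, so the entire risk lies in transcribing the figure correctly; I would guard against sign errors by checking that one and the same orientation choice simultaneously yields the unwinding in (b), (c) and the accumulation in (a), (d), which the stated lemma forces.
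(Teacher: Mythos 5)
Your proposal is correct and is essentially the paper's own argument: the paper simply reads all five relations off Figure~\ref{fig:DehnTwist}, citing $t_{a_{i+1}}(\alpha_i)\simeq \alpha_i * \alpha_{i+1}^{-1} * \alpha_i$ as its one worked example, and your splicing rule together with the based-loop identifications $a_i \simeq \alpha_{i-1}*\alpha_i^{-1}$ and $a_{i+1}\simeq \alpha_{i+1}^{-1}*\alpha_i$ is exactly the algebraic formalization of that picture. Your sign bookkeeping checks out internally (one orientation choice per $a_j$ yields all four nontrivial cases, with (e) from disjoint support) and is consistent with how the lemma is later applied, e.g.\ the computations of $t_{a_1}^{\epsilon_1}(\alpha_0)$ and $t_{a_1}^{\epsilon_1}(\alpha_1)$ in the proof of Lemma~\ref{Lemma:alpha_i}.
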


\begin{proof}
This is evident from Figure~\ref{fig:DehnTwist}. For example, the word for $t_{a_{i+1}} (\alpha_i)$ corresponds to the path $\alpha_i * \alpha_{i+1}^{-1}* \alpha_i$, which is homotopic relative to the boundary.
\end{proof}

\begin{lemma}
\label{Lemma:alpha_i}
If  $\phi= t_{a_{1}}^{\epsilon_{1}} \circ t_{a_2}^{\epsilon_2} \circ \cdots  \circ t_{a_{2g(K)}}^{\epsilon_{2g(K)}}$ with $\epsilon_i \in \{ +1, -1\}$, then $\phi(\alpha_i)$ is homotopic to a word in 
$\{ \alpha_0^{\pm 1}, \alpha_1^{\pm 1}, \alpha_2^{\pm 1}, \cdots, \alpha_i^{\pm 1}, \alpha_{i+1}^{\pm 1} \}$ 
containing exactly one $\alpha_{i+1}$ or $\alpha_{i+1}^{-1}$ for each $i= 0, 1,  \cdots, 2g(K)-1$.
\end{lemma}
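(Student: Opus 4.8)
The plan is to reduce the computation of $\phi(\alpha_i)$ to a tracking argument governed entirely by Lemma~\ref{Lemma:twist}, exploiting the fact that a twist $t_{a_j}^{\pm 1}$ moves $\alpha_m$ only when $m \in \{j-1, j\}$ (part (e)). Since $\phi = t_{a_1}^{\epsilon_1} \circ \cdots \circ t_{a_{2g(K)}}^{\epsilon_{2g(K)}}$ is applied to $\alpha_i$ starting from the rightmost (innermost) twist $t_{a_{2g(K)}}^{\epsilon_{2g(K)}}$, I would first observe that every twist $t_{a_j}^{\epsilon_j}$ with $j \geq i+2$ fixes $\alpha_i$, because such a twist only affects $\alpha_{j-1}$ and $\alpha_j$, and $j-1 \geq i+1 > i$. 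As these twists are applied in turn the word remains literally $\alpha_i$, so $\phi(\alpha_i) = (t_{a_1}^{\epsilon_1} \circ \cdots \circ t_{a_{i+1}}^{\epsilon_{i+1}})(\alpha_i)$, and only the twists indexed $1, \ldots, i+1$ are relevant.

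Next I would compute the first nontrivial step, $t_{a_{i+1}}^{\epsilon_{i+1}}(\alpha_i)$, directly from parts (a) and (b): it equals $\alpha_i * \alpha_{i+1}^{-1} * \alpha_i$ when $\epsilon_{i+1} = +1$ and $\alpha_{i+1}$ when $\epsilon_{i+1} = -1$. In either case the resulting word lies in $\{\alpha_0^{\pm 1}, \ldots, \alpha_{i+1}^{\pm 1}\}$ and contains exactly one occurrence of $\alpha_{i+1}^{\pm 1}$. This is the seed of the invariant I want to propagate.

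The core of the argument is then a downward induction on $j = i, i-1, \ldots, 1$, applying $t_{a_j}^{\epsilon_j}$ at each stage and maintaining the invariant that the current word lies in $\{\alpha_0^{\pm 1}, \ldots, \alpha_{i+1}^{\pm 1}\}$ with exactly one $\alpha_{i+1}^{\pm 1}$. For the inductive step I would use that $t_{a_j}^{\epsilon_j}$ with $j \leq i$ acts only on the letters $\alpha_{j-1}$ and $\alpha_j$, replacing each by a word in $\{\alpha_{j-1}^{\pm 1}, \alpha_j^{\pm 1}\}$ by parts (a)--(d), all of whose subscripts are $\leq j \leq i$. Crucially, $t_{a_j}^{\epsilon_j}$ never touches $\alpha_{i+1}$, since $i+1 \notin \{j-1, j\}$ for $j \leq i$; therefore the unique $\alpha_{i+1}^{\pm 1}$ letter survives untouched and the remaining letters stay within $\{\alpha_0^{\pm 1}, \ldots, \alpha_i^{\pm 1}\}$, which preserves the invariant.

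I expect the only point requiring care — the would-be main obstacle — is verifying that the single $\alpha_{i+1}^{\pm 1}$ remains genuinely unique after each substitution and any free reduction: one must check that no later twist can create or cancel an $\alpha_{i+1}^{\pm 1}$. This follows because the letter $\alpha_{i+1}$ is never an input to any relevant twist $t_{a_j}^{\pm 1}$ with $j \leq i$, so it is neither produced nor consumed, and being the only such letter it cannot be removed by free reduction. Assembling these observations yields the claim for every $i = 0, 1, \ldots, 2g(K)-1$.
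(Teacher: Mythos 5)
Your proof is correct and takes essentially the same route as the paper's: both reduce everything to Lemma~\ref{Lemma:twist}, observe that twists of index $\geq i+2$ fix $\alpha_i$ while twists of index $\leq i$ never touch $\alpha_{i+1}$, and track the single $\alpha_{i+1}^{\pm 1}$ created by the first effective twist $t_{a_{i+1}}^{\epsilon_{i+1}}$. The only difference is bookkeeping: the paper proves by an upward induction on $k$ the auxiliary claim that $t_{a_1}^{\epsilon_1}\circ\cdots\circ t_{a_k}^{\epsilon_k}(\alpha_k)$ is a word in subscripts $\leq k$ and then splits off the $\alpha_{k+1}^{\pm 1}$ letter explicitly according to the sign of $\epsilon_{k+1}$, whereas you maintain a letter-by-letter word invariant under the remaining downward sequence of twists --- the same content, differently organized.
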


\begin{proof}
For $i=0$, 
\[
\phi(\alpha_0) = t_{a_1}^{\epsilon_1}(\alpha_0) \simeq 
\begin{cases}
\alpha_0* \alpha_1^{-1}*\alpha_0, & \epsilon_1 = +1, \\
\alpha_1, & \epsilon_1 = -1
\end{cases}
\]
by using Lemma~\ref{Lemma:twist}. 

We claim that $t_{a_1}^{\epsilon_1} \circ \cdots  \circ t_{a_k}^{\epsilon_k} (\alpha_k)$ is a word in  $\{\alpha_0^{\pm 1}, \alpha_1^{\pm 1}, \alpha_2^{\pm 1}, \cdots, \alpha_{k-1}^{\pm 1}, \alpha_{k}^{\pm 1} \}$ for any $k=1, \cdots, 2g(K)$.
This holds for $k=1$ because
\[
t_{a_1}^{\epsilon_1}(\alpha_1) \simeq 
\begin{cases} 
\alpha_0, & \epsilon_1 = 1 \\
\alpha_1*\alpha_0^{-1}*\alpha_1, & \epsilon_1 = -1.
\end{cases}
\]
Assume it holds for $k= 1, 2, \cdots, i-1$. 
Since
\begin{eqnarray*}
& & t_{a_1}^{\epsilon_1} \circ \cdots  \circ t_{a_i}^{\epsilon_i} (\alpha_i) \simeq t_{a_1}^{\epsilon_1} \circ \cdots  \circ t_{a_{i-1}}^{\epsilon_{i-1}} (t_{a_i}^{\epsilon_i} (\alpha_i)) \\
&\simeq& \begin{cases}
t_{a_1}^{\epsilon_1} \circ \cdots  \circ t_{a_{i-1}}^{\epsilon_{i-1}} (\alpha_{i-1}), & \epsilon_i = 1, \\
t_{a_1}^{\epsilon_1} \circ \cdots  \circ t_{a_{i-1}}^{\epsilon_{i-1}} (\alpha_i*\alpha_{i-1}^{-1}* \alpha_i) = \alpha_i*(t_{a_1}^{\epsilon_1} \circ \cdots  \circ t_{a_{i-1}} (\alpha_{i-1}^{-1}))*\alpha_i, &  \epsilon_i = -1 
\end{cases}
\end{eqnarray*}
by Lemma~\ref{Lemma:twist}, it holds for $k=i$.

For  $k= 1, 2, \cdots, 2g(K)-1$, we know that
\[
t_{a_{k+1}}^{\epsilon_{k+1}}(\alpha_k) \simeq 
\begin{cases}
\alpha_k*\alpha_{k+1}^{-1} *\alpha_k, & \epsilon_{k+1} = +1,\\
\alpha_{k+1}, & \epsilon_{k+1} = -1
\end{cases}
\]
and 
\begin{eqnarray*}
\phi(\alpha_k)  &\simeq&  t_{a_1}^{\epsilon_1} \circ \cdots  \circ t_{a_k}^{\epsilon_k} ( t_{a_{k+1}}^{\epsilon_{k+1}}(\alpha_k)) \\
&\simeq &
\begin{cases}
(t_{a_1}^{\epsilon_1} \cdots t_{a_k}^{\epsilon_k} (\alpha_k))*\alpha_{k+1}^{-1} *(t_{a_1}^{\epsilon_1} \cdots t_{a_k}^{\epsilon_k} (\alpha_k)), & \epsilon_{k+1} = +1,\\
\alpha_{k+1}, & \epsilon_{k+1} = -1.
\end{cases}
\end{eqnarray*}
By the above claim, 
$t_{a_1}^{\epsilon_1} \circ \cdots  \circ t_{a_k}^{\epsilon_k} (\alpha_k)$ is a word in $\{\alpha_0^{\pm 1}, \alpha_1^{\pm 1}, \alpha_2^{\pm 1}, \cdots, \alpha_{k-1}^{\pm 1}, \alpha_{k}^{\pm 1} \}$. 
Therefore $\phi(\alpha_k)$ is a word in $\{\alpha_0^{\pm 1}, \alpha_1^{\pm 1}, \alpha_2^{\pm 1}, \cdots, \alpha_{k}^{\pm 1}, \alpha_{k+1}^{\pm 1} \}$ containing exactly one $\alpha_{k+1}$ or  $\alpha_{k+1}^{-1}$.
\end{proof}

\begin{theorem}
\label{Theorem:E(1)_K}
For any fibered two-bridge knot $K$, $E(1)_K$ admits a handle decomposition without $1$- and $3$-handles.
\end{theorem}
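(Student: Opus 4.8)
The plan is to exploit the decomposition $E(1)_K = X_1 \cup_\psi X_2$ and to cancel every $1$-handle in each of the two pieces. Recall from Subsection~\ref{subsection:LF} that, with $n=1$, each $X_i$ carries one $0$-handle, exactly $4g(K)$ four-dimensional $1$-handles (the $\alpha_j^*$, $1 \le j \le 4g(K)$) and $4g(K)+5$ two-handles. When $E(1)_K$ is assembled by gluing the upside-down handlebody of $X_2$ onto $X_1$, the $1$-handles of $X_2$ become $3$-handles of $E(1)_K$, while its $2$-handles stay $2$-handles. Hence it suffices to produce $4g(K)$ $1$-handle/$2$-handle canceling pairs inside $X_1$ and, independently, $4g(K)$ such pairs inside $X_2$: the former removes all $1$-handles of $E(1)_K$ and the latter removes all $3$-handles. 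Since the factorizations $\Phi_K(W)\cdot W$ of $X_1$ and $W\cdot\Phi_K(W)$ of $X_2$ consist of the same Dehn twists and the cancellation reads only the homotopy classes of the vanishing cycles in $\pi_1(F_1\times D^2)$, I would carry out the analysis for $X_1$; the same method applies to $X_2$.

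To cancel a $1$-handle $\alpha_j^*$ against a $2$-handle it is enough, by the standard cancellation criterion, that the vanishing cycle meet the co-core $\alpha_j^*$ transversally in a single point, i.e.\ that its word in the generators $\{\alpha_1^{\pm1},\dots,\alpha_{4g(K)}^{\pm1}\}$ contain the letter $\alpha_j^{\pm1}$ exactly once \emph{geometrically}, not merely algebraically. This is precisely the data packaged by Lemmas~\ref{Lemma:B_i}--\ref{Lemma:alpha_i}. First I would cancel the \emph{second half} of the $1$-handles: by Lemma~\ref{Lemma:B_i} each $B_i \simeq \beta_i * \alpha_{4g(K)+1-i}$ (for $1 \le i \le 2g(K)$) contains the high-index letter $\alpha_{4g(K)+1-i}$ exactly once, while $\beta_i$ is a word in the low-index generators $\alpha_0,\dots,\alpha_i$ only. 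Thus the pairs $(\alpha_{4g(K)+1-i}^*,\, t_{B_i})$ cancel, eliminating $\alpha_{2g(K)+1}^*,\dots,\alpha_{4g(K)}^*$.

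For the \emph{first half} $\alpha_1^*,\dots,\alpha_{2g(K)}^*$ I would use the twists coming from $\Phi_K(W)$. Because $\Phi_K=\phi_K\oplus\mathrm{id}$ (the middle factor $\Sigma_{n-1}$ being trivial for $n=1$) acts as $\phi_K$ on the generators of index $\le 2g(K)$ and fixes those of index $>2g(K)$, we have $\Phi_K(B_i)\simeq \phi_K(\beta_i)*\alpha_{4g(K)+1-i}$. After the previous stage, sliding $\Phi_K(B_i)$ off the already-canceled handle $\alpha_{4g(K)+1-i}^*$ replaces that single letter by $\beta_i^{-1}$, leaving a word $\phi_K(\beta_i)*\beta_i^{-1}$ in the low-index generators. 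Lemma~\ref{Lemma:alpha_i} says $\phi_K(\alpha_i)$ contains exactly one $\alpha_{i+1}^{\pm1}$, so this word contains $\alpha_{i+1}^{\pm1}$ exactly once; letting $i$ run through $0,1,\dots,2g(K)-1$ (the boundary case $i=0$, where the long path $\widetilde{\alpha}_{4g(K)+1}$ appears, treated separately) cancels $\alpha_1^*,\dots,\alpha_{2g(K)}^*$. Together these furnish the required $4g(K)$ canceling pairs in $X_1$, and symmetrically in $X_2$, so $E(1)_K$ has a handle decomposition without $1$- and $3$-handles.

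The main obstacle is the \emph{ordering together with the handle slides}. A cancellation is legitimate only if, at the instant it is performed, the attaching circle crosses the relevant co-core geometrically once; but canceling an earlier pair slides all later attaching circles and alters their words. So the genuine content is to fix an order (second half before first half, and within each half an order dictated by the index) in which every vanishing cycle still meets its target $1$-handle exactly once, and to verify—tracking each slide with Lemma~\ref{Lemma:twist}—that this unique-occurrence property persists throughout. The long curves $B_0$ and $\widetilde{\alpha}_{4g(K)+1}$, which cross many handles at once, will demand the most care, since one must confirm that the earlier cancellations simplify them before they are invoked.
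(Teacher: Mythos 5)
Your global architecture coincides with the paper's (cancel $4g(K)$ pairs in each of $X_1$ and $X_2$, glue $X_2$ upside down, read off unique geometric crossings from Lemmas~\ref{Lemma:B_i}--\ref{Lemma:alpha_i}), and swapping the order of the two halves is harmless: since $B_i \simeq \beta_i*\alpha_{4g(K)+1-i}$ meets the belt sphere of $\alpha_{4g(K)+1-i}^*$ once regardless of its low-index crossings, your ``second half first'' stage is legitimate as stated. The genuine gap is in your stage two: you apply Lemma~\ref{Lemma:alpha_i} to the knot monodromy $\phi_K$ itself, but by Lemma~\ref{Lemma:2-bridge}(d) that monodromy is $\phi_K = t_{a_{2k}}^{\epsilon_{2k}}\circ\cdots\circ t_{a_2}^{\epsilon_2}\circ t_{a_1}^{\epsilon_1}$, whereas Lemma~\ref{Lemma:alpha_i} is proved only for the \emph{opposite} composition order $\phi = t_{a_1}^{\epsilon_1}\circ\cdots\circ t_{a_{2g(K)}}^{\epsilon_{2g(K)}}$, and for $\phi_K$ its conclusion is simply false: for instance, if $(\epsilon_i,\epsilon_{i+1})=(-1,+1)$ then
\[
t_{a_{i+1}}\circ t_{a_i}^{-1}(\alpha_i) \simeq (\alpha_i*\alpha_{i+1}^{-1}*\alpha_i)*\alpha_{i-1}^{-1}*(\alpha_i*\alpha_{i+1}^{-1}*\alpha_i)
\]
contains two occurrences of $\alpha_{i+1}^{-1}$, and more generally $\phi_K(\alpha_j)$ for small $j$ accumulates letters of index all the way up to $2g(K)$ with multiplicity, so your low-to-high induction collapses at the first step. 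The paper's proof contains exactly the maneuver you are missing: since $\Phi_K(W)\cdot W = \Phi_K\bigl(W\cdot\Phi_K^{-1}(W)\bigr)$, the piece $X_1$ is isomorphic to the Lefschetz fibration with factorization $W\cdot\Phi_K^{-1}(W)$ (and $X_2$ to $\Phi_K^{-1}(W)\cdot W$), and $\phi_K^{-1} = t_{a_1}^{-\epsilon_1}\circ t_{a_2}^{-\epsilon_2}\circ\cdots\circ t_{a_{2k}}^{-\epsilon_{2k}}$ --- the mirror knot's monodromy --- has precisely the index order to which Lemma~\ref{Lemma:alpha_i} applies. Without this conjugation/Hurwitz step, your unique-occurrence claim for $\phi_K(\beta_i)*\beta_i^{-1}$ is unjustified and in general wrong.

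Two smaller points. First, your $i=0$ case is not merely ``to be treated separately'': since $B_0$ is never used as a canceling handle (its tail $\widetilde{\alpha}_{4g(K)+1}$ crosses all $2g(K)$ high-index handles, each canceled by a \emph{different} $B_j$), your recipe of ``sliding $\Phi_K(B_i)$ off the already-canceled handle'' does not apply and would leave a long word crossing $\alpha_1^*$ many times; one must instead slide $\phi(B_0)$ over the $B_0$-handle directly, killing the common tail $\widetilde{\alpha}_{4g(K)+1}$ and leaving $\phi(\beta_0)*\beta_0^{-1}$ (the paper's $\widetilde{H}_1$). Second, the persistence-under-slides issue you flag as ``the main obstacle'' does resolve, by the same observation the paper uses when it slides $H_i$ over the previous $\widetilde{H}_j$: canceling $\alpha_j^*$ only inserts words in letters of index $<j$ into other attaching circles, so it never creates new crossings of $\alpha_{j'}^*$ for $j'>j$, and the unique-crossing property of each later cycle survives. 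So your outline is repairable, but as written it omits the one step (the passage to $W\cdot\Phi_K^{-1}(W)$) on which the inductive engine of the proof actually depends.
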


\begin{proof}
We demonstrate that  $X_i$ ($i=1, 2$) has a handle decomposition without $1$-handles where $X_1$ is the Lefschetz fibration over $D^2$ with  monodromy factorization $\Phi_K(W) \cdot W$, and $X_2$  has monodromy factorization $W \cdot \Phi_K(W)$. 

Taking the mirror image of $K = D(\epsilon_1, \epsilon_2, \cdots, \epsilon_{2k})$, we have
\[
K^* = D(-\epsilon_1, -\epsilon_2, \cdots, -\epsilon_{2k})
\]
with monodromy 
$\phi_{K^*} = t_{a_{2k}}^{-\epsilon_{2k}} \circ \cdots \circ t_{a_2}^{-\epsilon_2} \circ t_{a_1}^{-\epsilon_1}$. 

Since 
$\Phi_K(W) \cdot W = \Phi_K(W \cdot \Phi_K^{-1}(W))$,
$X_1$ is isomorphic to the Lefschetz fibration with monodromy factorization $W \cdot \Phi_K^{-1}(W)$. 
Similarly,  $X_2$ is isomorphic to the Lefschetz fibration with monodromy factorization $\Phi_K^{-1}(W) \cdot W$.
Thus, each of $X_1$ and $X_2$ contains $(4g(K)+5)$ $2$-handles with attaching spheres given by the corresponding vanishing cycles:
\[
 B_0, B_1, \cdots, B_{2g(K)}, c_1, \phi(B_0), \phi(B_1), \cdots, \phi(B_{2g(K)}), \phi(c_1), \partial F_1
\]
where
 $\phi= t_{a_{1}}^{\epsilon_{1}} \circ t_{a_2}^{\epsilon_2} \circ \cdots  \circ t_{a_{2g(K)}}^{\epsilon_{2g(K)}}$ and $\epsilon_i \in \{ +1, -1\}$.

We decompose $\phi(B_0)$ as follows: 
\begin{eqnarray*}
\phi(B_0) & =& \phi( \beta_0 *\widetilde{ \alpha}_{4g(K) +1} ) = t_{a_1}^{\epsilon_1}(\alpha_0^{-1}) *  \alpha_{4g(K) +1}\\
&\simeq& \begin{cases} 
\alpha_0^{-1}*\alpha_1*\alpha_0^{-1}* \widetilde{ \alpha}_{4g(K) +1}, & \text{if } \epsilon_1 = 1, \\
\alpha_1^{-1} *  \widetilde{\alpha}_{4g(K) +1}, & \text{if } \epsilon_1 = -1.
\end{cases}
\end{eqnarray*}
Let $\widetilde{H}_1$ be the $2$-handle obatined by sliding $\phi(B_0)$ over $B_0$.
The attaching circle of $\widetilde{H}_1$ is homotopic to 
$\begin{cases} 
\alpha_0^{-1}*\alpha_1, & \epsilon_1 =1, \\ 
\alpha_1^{-1}*\alpha_0, & \epsilon_1 = -1. 
\end{cases} $ 
Therefore, the $2$-handle $\widetilde{H}_1$ and the $1$-handle $\alpha_1^*$ forms a canceling pair.

We have $B_1 \simeq \beta_1*\alpha_{4g(K)}$ and $\phi(B_1) \simeq \phi(\beta_1)* \alpha_{4g(K)}$. 
Sliding $\phi(B_1)$ over $B_1$ yields a $2$-handle $H_2$ whose attaching circle is homotopic to $\phi(\beta_1)* \beta_1^{-1}$. 
Since 
\begin{eqnarray*}
\phi(\beta_1) &=& \phi(\alpha_0^{-1}*\alpha_1*\alpha_0^{-1}) =(t_{a_1}^{\epsilon_1}(\alpha_0))^{-1}* (t_{a_1}^{\epsilon_1}t_{a_2}^{\epsilon_2}(\alpha_1))*(t_{a_1}^{\epsilon_1}(\alpha_0))^{-1}\\
&\simeq& \begin{cases} 
\alpha_0^{-1}*\alpha_1*\alpha_2^{-1}*\alpha_1*\alpha_0^{-1} , &  \epsilon_2= 1, \\
\alpha_0^{-1}*\alpha_1*\alpha_0^{-1}*\alpha_2*\alpha_0^{-1}*\alpha_1*\alpha_0^{-1}, & (\epsilon_1, \epsilon_2) = (1,-1),\\
\alpha_1^{-1}*\alpha_2*\alpha_1^{-1},& (\epsilon_1, \epsilon_2) = (-1,-1).
\end{cases}
\end{eqnarray*}
by Lemma~\ref{Lemma:twist}, 
\[
\phi(\beta_1)* \beta_1^{-1} \simeq
\begin{cases} 
\alpha_0^{-1}*\alpha_1*\alpha_2^{-1}*\alpha_0 , & \epsilon_2 = 1, \\
\alpha_0^{-1}*\alpha_1*\alpha_0^{-1}*\alpha_2, & (\epsilon_1, \epsilon_2) = (1,-1),\\
\alpha_1^{-1}*\alpha_2*\alpha_1^{-1}*\alpha_0*\alpha_1^{-1}*\alpha_0,& (\epsilon_1, \epsilon_2) = (-1,-1).
\end{cases}
\]
We slide $H_2$ over $\widetilde{H}_1$ whenever there is a letter $\alpha_1^{\pm 1}$, 
and then we obtain a $2$-handle $\widetilde{H}_2$ whose attaching circle goes over the $4$-dimensional $1$-handle $\alpha_2^*$ exactly one time. Therefore  $\widetilde{H}_2$ and $\alpha_2^*$ forms a canceling pair.

In general, 
sliding $\phi(B_{i-1})$ over $B_{i-1}$ yields a $2$-handle $H_i$ ($i= 1, 2, \cdots, 2g(K)$) whose attaching circle is homotopic to
\[
\phi(\alpha_1)*\phi(\alpha_2)^{-1}* \cdots * \phi(\alpha_{i-1})^{(-1)^i}*\cdots *\phi(\alpha_1)* \alpha_1^{-1}* \alpha_2* \cdots* \alpha_{i-1}^{(-1)^{i-1}}* \cdots *\alpha_2*\alpha_1^{-1}.
\]
Applying Lemma~\ref{Lemma:alpha_i}, 
it is a word of letters in $\{\alpha_0^{\pm 1}, \alpha_1^{\pm 1}, \alpha_2^{\pm 1}, \cdots, \alpha_{i-1}^{\pm 1}, \alpha_{i}^{\pm 1} \}$  
and the word contains exactly one $\alpha_{i}$ or $\alpha_{i}^{-1}$ . 
We slide $H_i$ over previous handles $\widetilde{H}_j$ ($j=1, 2, \cdots, i-1$) to isolate $\alpha_i^{\pm 1}$, 
and then obtain a $4$-dimensional $2$-handle  $\widetilde{H}_i$. 
 We observe that each $2$-handle $\widetilde{H}_i$ ($i=1, 2, \cdots, 2g(K)$) passes through the $4$-dimensional $1$-handle $h_i^1 \times D^2$ and  it does not pass through any other $1$-handles.
 So  $(\alpha_i^*, \widetilde{H}_i)$ forms a canceling pair for each $i=1, 2, \cdots, 2g(K)$.

We have
\[
B_i \simeq  \alpha_0^{-1}* \alpha_1 *  \cdots * \alpha_{i-1}^{(-1)^i}* \alpha_i^{(-1)^{i+1}}*  \alpha_{i-1}^{(-1)^i}* \cdots *\alpha_1*\alpha_0^{-1} *\alpha_{4g(K) + 1 -i}
\]
from Lemma~\ref{Lemma:B_i}. 
For each $i=1, 2, \cdots, 2g(K)$, 
let $\widetilde{H}_{2g(K)+i}$ be the $2$-handle obtained by sliding $B_{2g(K) + 1 -i}$ over $\widetilde{H}_j$ ($j=1, 2, \cdots,  2g(K) + 1 -i$) to isolate $\alpha_{2g(K)+i}$, 
then $(\alpha_{2g(K) + i}^*, \widetilde{H}_{2g(K)+i})$ forms a canceling pair for each $i=1, 2, \cdots, 2g(K)$.

Therefore, each $X_j$ has $4g(K)$ $1$- and $2$-handle canceling pairs,
\[
(\alpha_i^*, \widetilde{H}_i) \text{ and } ( \alpha_{2g(K)+i}^*, \widetilde{H}_{2g(K) + i})  \quad (i=1, 2, \cdots, 2g(K)),
\]
leaving one $0$-handle and five $2$-handles.  
The total handle structure of $E(1)_K$ thus comproses one $0$-handle, one $4$-handle and ten $2$-handles.
\end{proof}

\begin{remark}
We have extra five $4$-dimensional $2$-handles on each $X_i$ which are not used for the construction of canceling $2$-handles. Each of them can be modified by $2$-handle slides so that its attaching circle does not go through any $1$-handles.  So they do not cause any problem in the $1$-handle/$2$-handle canceling process.
\end{remark}

\subsection{\texorpdfstring{$E(n)_K$}{E(n)K} case for \texorpdfstring{$n\ge 2$}{nge2}}

The proof is essentially the same as in the case $n=1$, except for the decomposition of $B_i$ ($i=0, 1, \cdots, 4g(K)$) into subarcs and the construction of additional canceling $2$-handle for each $1$-handle $\alpha_{4g(K) + i}^*$ ($i=1, 2, \cdots, 2n-2$).  

\begin{theorem} 
\label{Theorem:EnK}
For any fibered two-bridge knot $K$ and any integer $n \ge 2$, the knot surgery $4$-manifod
$E(n)_K$ admits a handle decomposition without $1$- and $3$-handles.
\end{theorem}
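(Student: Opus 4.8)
The plan is to run the argument of Theorem~\ref{Theorem:E(1)_K} essentially unchanged, correcting it in exactly the two places where the enlarged fiber $\Sigma_{2g(K)+n-1}$ forces a difference. As before, it suffices to show that each of $X_1$ and $X_2$ admits a handle decomposition with no $1$-handles: then, writing $E(n)_K = X_1\cup_\psi X_2$ and turning $X_2$ upside down, the $1$-handles of $X_2$ become $3$-handles of $E(n)_K$ and are likewise absent. Each $X_i$ begins with one $0$-handle, $(4g(K)+2n-2)$ $1$-handles $\alpha_1^*,\dots,\alpha_{4g(K)+2n-2}^*$, and $(4g(K)+8n-3)$ $2$-handles, one per right-handed Dehn twist in $W\cdot\Phi_K^{-1}(W)$ together with the $\partial F_1$ handle. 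I must therefore produce $(4g(K)+2n-2)$ $1$-/$2$-handle canceling pairs, i.e.\ $2n-2$ more than in the case $n=1$.

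\emph{First, the $\gamma$-correction for the $4g(K)$ original pairs.} For $n\ge2$ the middle factor $\Sigma_{n-1}$ is nontrivial, so (Figure~\ref{fig:B_i}) each vanishing cycle now decomposes as $B_i\simeq\beta_i*\gamma*\alpha_{4g(K)+1-i}$, where the connecting path $\gamma$ records how the loop threads the middle $1$-handles $\alpha_{4g(K)+1}^*,\dots,\alpha_{4g(K)+2n-2}^*$. Since $\Phi_K=\phi_K\oplus\mathrm{id}\oplus\mathrm{id}$ restricts to the identity on the middle factor, $\phi$ fixes $\gamma$ and fixes every $\alpha_{4g(K)+1-i}$; hence in the slide of $\phi(B_{i-1})$ over $B_{i-1}$ the common tail $\gamma*\alpha_{4g(K)+2-i}$ cancels exactly as the tail $\alpha_{4g(K)+2-i}$ did for $n=1$, leaving the attaching circle $\phi(\beta_{i-1})*\beta_{i-1}^{-1}$. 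Lemma~\ref{Lemma:twist} and Lemma~\ref{Lemma:alpha_i} then isolate a single $\alpha_i^{\pm1}$ word-for-word as in Theorem~\ref{Theorem:E(1)_K}, giving the canceling pairs $(\alpha_i^*,\widetilde H_i)$ for $i=1,\dots,2g(K)$. I postpone the dual pairs $(\alpha_{2g(K)+i}^*,\widetilde H_{2g(K)+i})$ obtained from $B_{2g(K)+1-i}$, because those carry a $\gamma$ that does \emph{not} cancel until the middle handles are gone.

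\emph{Second, canceling the $2n-2$ middle handles.} The middle factor $\Sigma_{n-1}$ carries the chain $c_1,\dots,c_{2n-1}$ appearing in $W$, and $\Phi_K$ fixes each $c_j$; thus the corresponding $2$-handles survive in $W\cdot\Phi_K^{-1}(W)$ and are not used in the first step. I would compute, from the local pictures of the chain (the analogue of Figure~\ref{fig:DehnTwist} and Lemma~\ref{Lemma:twist} for the curves $c_j$), the homotopy classes of $c_1,\dots,c_{2n-2}$ as words in the middle generators $\alpha_{4g(K)+1},\dots,\alpha_{4g(K)+2n-2}$, and verify that they form a triangular system in which each successive $c_j$ introduces exactly one new middle generator. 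Sliding these $2$-handles over one another to isolate the generators one at a time — exactly as $\widetilde H_i$ was built from $H_i$ — yields a canceling pair for each $\alpha_{4g(K)+i}^*$, $i=1,\dots,2n-2$. Once these middle $1$-handles are cancelled, the path $\gamma$ becomes null-homotopic in the reduced handlebody, so $B_{2g(K)+1-i}\simeq\beta_{2g(K)+1-i}*\alpha_{2g(K)+i}$ and the postponed dual pairs $(\alpha_{2g(K)+i}^*,\widetilde H_{2g(K)+i})$ are produced precisely as in the $n=1$ proof.

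Collecting the three families gives $2g(K)+(2n-2)+2g(K)=4g(K)+2n-2$ canceling pairs, removing every $1$-handle of each $X_i$; the remaining $6n-1$ unused $2$-handles per $X_i$ can be slid off all $1$-handles (as in the remark following Theorem~\ref{Theorem:E(1)_K}), so each $X_i$ is left with only a $0$-handle and $2$-handles. Hence $E(n)_K=X_1\cup_\psi X_2$ has a handle decomposition with one $0$-handle, one $4$-handle, and $2$-handles only. The main obstacle is the middle batch together with its ordering against $\gamma$: I must check both that the chain vanishing cycles $c_1,\dots,c_{2n-2}$ genuinely form an invertible (triangular) system in the middle generators — so that each middle $1$-handle is hit exactly once — and that performing these cancellations before the dual family is precisely what trivializes $\gamma$, letting the dual computation collapse to the already-established $n=1$ case. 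Everything else is the bookkeeping of Lemmas~\ref{Lemma:twist} and~\ref{Lemma:alpha_i} carried over verbatim.
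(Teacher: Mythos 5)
Your proposal is correct and takes essentially the same route as the paper: the same three families of canceling pairs, namely the $2g(K)$ pairs from sliding $\phi(B_{i-1})$ over $B_{i-1}$ (with the common tail $\gamma*\alpha_{4g(K)+2-i}$ canceling because $\phi$ fixes $\gamma$ and the $\alpha_{4g(K)+1-i}$), then the $2n-2$ middle pairs from the chain curves $c_1,\dots,c_{2n-2}$, and finally the dual pairs from $B_{2g(K)+1-i}$ after the middle handles are gone. The triangular system you flag as the remaining verification is exactly what the paper establishes by the explicit computations $c_1\simeq\alpha_{4g(K)+2n-3}*\alpha_0^{-1}$, $c_{2i}\simeq\alpha_{4g(K)+2n-2i}*\alpha_{4g(K)+2n-2i-1}^{-1}$ and $c_{2i+1}\simeq\alpha_{4g(K)+2n-2i-3}*\alpha_{4g(K)+2n-2i-1}^{-1}$, so each chain handle introduces exactly one new middle generator (and in the dual step $\gamma$ contributes only the letter $\alpha_{4g(K)+1}^{-1}$, removed by sliding over $\widetilde{H}_{4g(K)+1}$), as you predicted.
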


\begin{proof}
Let $\alpha_i$, $\beta_i$ and $\gamma$ be the paths shown in Figure~\ref{fig:B_i}, 
then 
\[
B_i \simeq \beta_i * \gamma * \alpha_{4g(K) + 1 -i},  \qquad i=1, 2, \cdots, 2g(K)
\]
and 
\[
B_0 \simeq \beta_0 * \gamma* \widetilde{\alpha}_{4g(K) + 1}.
\]
Let $\phi = t_{a_1}^{\epsilon_1} \circ t_{a_2}^{\epsilon_2} \circ \cdots \circ t_{a_{2g(K)}}^{\epsilon_{2g(K)}}$ as defined previously, 
then
\begin{eqnarray*}
\phi(B_0) &\simeq& \phi(\beta_0 )* \gamma* \widetilde{\alpha}_{4g(K) + 1}, \\
\phi(B_i) &\simeq&  \phi(\beta_i)* \gamma*\alpha_{4g(K) + 1 -i} \quad (i=1, 2, \cdots, 2g(K)),
\end{eqnarray*}
because each $a_i$ is disjoint from $\gamma$,  $\widetilde{\alpha}_{4g(K) + 1}$ and $\alpha_{4g(K) + 1 -i}$.

Let $H_i$ be the $2$-handle obtained by sliding $\phi(B_{i-1})$ over $B_{i-1}$ for each $i=1, 2, \cdots, 2g(K)$. 
The attaching circle of $H_i$ is then homotopic to 
\[
\phi(\beta_{i-1}) * \beta_{i-1}^{-1}.
\]
As stated in Lemma~\ref{Lemma:B_i},
each $\beta_{i-1}$ is a word in $\{\alpha_0^{\pm 1}, \alpha_1^{\pm 1}, \cdots, \alpha_{i-1}^{\pm 1}\}$ 
containing exactly one instance of  $\alpha_{i-1}^{\pm 1}$.
Applying Lemma~\ref{Lemma:alpha_i} to each letter $\phi(\alpha_j)$ in the word (for $j=0, 1, \cdots, i-1$)
yields a word in $\{\alpha_0^{\pm 1}, \alpha_1^{\pm 1}, \cdots, \alpha_{i}^{\pm 1}\}$ 
containing exactly one instance of $\alpha_{i}^{\pm 1}$, as shown in the proof of Theorem~\ref{Theorem:E(1)_K}.

Let $\widetilde{H}_1 = H_1$.
For $i \ge 2$, let $\widetilde{H}_i$ ($i\ge 2$) be obtained from $H_i$ by sliding it over $\widetilde{H}_j$ (where $j<i$) corresponding to each letter $\alpha_j^{\pm 1}$ in the attaching circle's word.  
Then $\widetilde{H}_i$ becomes a canceling $2$-handle for the  $1$-handle associated with $\alpha_i$ ($i=1, 2, \cdots, 2g(K)$).

Using the cores and co-cores of $1$-handles as in Figure~\ref{fig:Handle_fiber}, 
we obtain
\begin{eqnarray*}
c_1 &\simeq& \alpha_{4g(K) + 2n -3} * \alpha_0^{-1}, \\
c_{2i} &\simeq& \alpha_{4g(K) + 2n -2i} * \alpha_{4g(K) + 2n -2i-1}^{-1}, \qquad  i=1, 2, \cdots, n-1, \\
c_{2i+1} &\simeq& \alpha_{4g(K)+2n -2i-3} * \alpha_{4g(K) +2n-2i-1}^{-1},  \qquad  i=1, 2, \cdots, n-2, \\
c_{2n-1} &\simeq&  \alpha_{2g(K)}^{-1}*\alpha_{2g(K)-1} * \cdots*\alpha_1 * \alpha_{4g(K)+1}^{-1}*\alpha_{2g(K)}*\alpha_{2g(K)-1}^{-1} * \cdots*\alpha_1^{-1}*\alpha_0 
\end{eqnarray*}
by selecting an orientation of each $c_i$.

We define the canceling pairs as follows:
\begin{itemize}
\item[(1)] Let $\widetilde{H}_{4g(K) + 2n -3}$ be the $2$-handle derived from $t_{c_1}$;  
it is a canceling pair for the $1$-handle $\alpha_{4g(K) + 2n -3}^*$. 
\item[(2)] Let $H_{4g(K) + 2n -2}$ be the $2$-handle derived from $t_{c_2}$. Sliding it over  $\widetilde{H}_{4g(K) + 2n -3}$ yields $\widetilde{H}_{4g(K) + 2n -2}$ which cancels the $1$-handle $\alpha_{4g(K) + 2n -2}^*$. 
\item[(3)]  Generally, 
 define $H_{4g(K) + 2n -2i-3}$  as the $2$-handle derived from $t_{c_{2i+1}}$. 
 Sliding it over  $\widetilde{H}_{4g(K) + 2n -2i -1}$ yields $\widetilde{H}_{4g(K) + 2n -2i-3}$ 
 which cancels the $1$-handle $\alpha_{4g(K) + 2n -2i-3}^*$ ($i= 1, 2,  \cdots, n-2$).
\item[(4)] Similarly, define $H_{4g(K) + 2n -2i}$ as the $2$-handle derived from $t_{c_{2i}}$. 
Sliding it over  $\widetilde{H}_{4g(K) + 2n -2i -1}$ yields $\widetilde{H}_{4g(K) + 2n -2i}$, 
 which cancels the $1$-handle $\alpha_{4g(K) + 2n -2i}^*$ ($i= 1, 2, \cdots, n-1$). 
 \end{itemize}
Thus,  $( \alpha_{4g(K)+i}^*,  \widetilde{H}_{4g(K) + i})$ forms $1$-handle/$2$-handle canceling pairs for each $i =1, 2, \cdots, 2n-2$.

It remains to find a canceling $2$-handle for each $\alpha_{2g(K) + i}$  ($i=1, 2, \cdots, 2g(K)$). 
From the cores and co-cores of $1$-handles as in Figure~\ref{fig:Handle_fiber}, 
we verify that 
\begin{eqnarray*}
B_1 &\simeq& \alpha_1*\alpha_{4g(K) + 1}^{-1} * \alpha_{4g(K)}*\alpha_0^{-1}, \\
B_2 &\simeq& \alpha_1*\alpha_2^{-1}*\alpha_1*\alpha_{4g(K)+1}^{-1}*\alpha_{4g(K)-1}*\alpha_0^{-1}, \\
B_i &\simeq& \alpha_1*\cdots * \alpha_{i-1}^{(-1)^i}*\alpha_i^{(-1)^{i+1}}*\alpha_{i-1}^{(-1)^i}*\cdots *\alpha_1*\alpha_{4g(K)+1}^{-1}*\alpha_{4g(K)+1 -i}*\alpha_0^{-1}, \\
&&  (i=3, 4, \cdots, 2g(K)).
\end{eqnarray*}
We slide $B_{2g(K) + 1-i}$ over $\widetilde{H}_j$ ($j= 1, 2, \cdots, 2g(K)+1 -i, 4g(K) +1$) 
whenever the word for $B_{2g(K) + 1-i}$ contains the letter $\alpha_j$. 
We denote the resulting $2$-handle as $\widetilde{H}_{2g(K) + i}$ ($i=1, 2, \cdots, 2g(K)$), then $\widetilde{H}_{2g(K) + i}$ cancels the $1$-handle $\alpha_{2g(K) + i}^*$.
\end{proof}

%%%%%%%%%%%
%%%% Section
%%%%%%%%%%%

\section{ Handle decomposition of $E(n)_K$ for a family of Stallings knot} 

In this section, we consider the knot surgery $4$-manifold $E(n)_{K_m}$  and its handle decomposition, where $K_m$ is the family of Stallings knot ~\cite{Stallings:78} obtained from $3_1\sharp 3_1^*$ by performing Stallings twist $m$ times ($m \in \mathbb{Z}$) where $3_1$ is the left-handed trefoil knot. It is known that $K_m$ is a $3$-bridge knot.

\begin{theorem}
\label{theorem:Stallings}
For any positive integer $n$ and any integer $m \in \mathbb{Z}$, the knot surgery $4$-manifold $E(n)_{K_m}$ admits a  handle decomposition without $1$- and $3$-handles.
\end{theorem}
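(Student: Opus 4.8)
The plan is to follow the blueprint of Theorems~\ref{Theorem:E(1)_K} and~\ref{Theorem:EnK}, replacing the linear-chain two-bridge monodromy by the explicit monodromy of the Stallings knot. The first step is to record that $K_m$ is a fibered knot of genus $g(K_m)=2$: the connected sum $3_1\sharp 3_1^*$ is fibered of genus $2$, and a Stallings twist preserves both fiberedness and the genus of the fiber surface, so every $K_m$ fibers over $S^1$ with fiber $\Sigma_2^1$. I would then write the monodromy in the form
\[
\phi_{K_m}=\phi_0\circ t_c^{\,m},
\]
where $\phi_0$ is the monodromy of $3_1\sharp 3_1^*$ --- a product of four Dehn twists along a standard basis $a_1,a_2,a_3,a_4$ of $\Sigma_2^1$, negative along $a_1,a_2$ (the left-handed summand) and positive along $a_3,a_4$ (its mirror) --- and $c$ is the simple closed curve along which the Stallings twist is performed. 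Here $a_1,a_2$ form one chain and $a_3,a_4$ another, while $c$ is a non-separating curve meeting both handles; this last feature is exactly what makes $K_m$ a $3$-bridge rather than a two-bridge knot.

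With the monodromy in hand, the framework of Section~\ref{subsection:MF} gives $E(n)_{K_m}=X_1\cup_\psi X_2$, and by Section~\ref{subsection:LF} each $X_i$ has one $0$-handle, $(4g(K_m)+2n-2)=(2n+6)$ one-handles, and $2$-handles indexed by the vanishing cycles of $\Phi_{K_m}(W)\cdot W$. Since the word $W$ depends only on $g(K_m)=2$ and $n$, it coincides with the word $W$ for a genus-$2$ two-bridge knot; hence every cancellation in the proofs of Theorems~\ref{Theorem:E(1)_K} and~\ref{Theorem:EnK} that does not see $\phi_{K}$ transfers verbatim. Concretely, the pairs attached to $\alpha_{4g(K_m)+i}^*$ $(i=1,\dots,2n-2)$ coming from the twists $t_{c_j}$, and the pairs attached to the second genus-$2$ block $\alpha_{2g(K_m)+i}^*$ (built by sliding the knot-independent cycles $B_{2g(K_m)+1-i}$), are identical to the two-bridge case. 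Thus the only genuinely new work is to construct the four canceling handles $\widetilde H_1,\dots,\widetilde H_4$ obtained from $\phi_{K_m}(B_0),\dots,\phi_{K_m}(B_3)$ for the one-handles $\alpha_1^*,\dots,\alpha_4^*$; once these exist, the remaining cancellations follow the genus-$2$ template by sliding over the $\widetilde H_j$.

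For these four handles I would prove a Stallings analogue of Lemmas~\ref{Lemma:twist} and~\ref{Lemma:alpha_i}. First I compute $t_{a_j}^{\pm1}(\alpha_i)$ and $t_c^{\pm1}(\alpha_i)$ as words in $\pi_1(F_1\times D^2,(b_0,\mathbb{O}))$ directly from the intersection pattern of the curves with the co-cores $\alpha_i^*$ (as in Figure~\ref{fig:DehnTwist}), then compose to obtain $\phi_{K_m}(\alpha_i)$. The attaching circle of $H_i$ is $\phi_{K_m}(\beta_{i-1})* \beta_{i-1}^{-1}$, and the goal is the structural conclusion of Lemma~\ref{Lemma:alpha_i}: after sliding over the previously constructed canceling handles to delete every $\alpha_j^{\pm1}$ with $j$ already cancelled, the attaching circle crosses a single remaining $1$-handle algebraically once, so it cancels that handle. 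Carrying out these four slide-and-cancel computations explicitly, and checking that no earlier cancellation is disturbed, completes the count of $(2n+6)$ pairs and removes all $1$-handles from each $X_i$, hence all $1$- and $3$-handles from $E(n)_{K_m}$.

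The main obstacle is the arbitrary power $t_c^{\,m}$. Unlike the chain twists $t_{a_j}$, which move $\alpha_i$ to a neighboring arc or to a short palindrome (Lemma~\ref{Lemma:twist}), the non-separating Stallings curve $c$ meets several co-cores $\alpha_j^*$, so $t_c^{\,m}(\alpha_i)$ is a word whose length grows linearly in $|m|$ and whose exponent sum varies with $m$. For the sequential slide-and-cancel argument to survive, I must establish the triangular structure of Lemma~\ref{Lemma:alpha_i} uniformly in $m$: that $\phi_{K_m}(\alpha_i)$, however long, is a word in $\{\alpha_0^{\pm1},\dots,\alpha_{i+1}^{\pm1}\}$ with algebraic crossing $\pm1$ over the single co-core $\alpha_{i+1}^*$. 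I expect this to follow from $c$ being disjoint from $\alpha_{i+1}^*$ at each relevant stage, so that $t_c^{\,m}$ contributes only letters $\alpha_j^{\pm1}$ with $j$ already cancelled; those letters are then absorbed by handle slides and the cancellation goes through for every $m\in\mathbb{Z}$. Pinning down the geometric intersections of $c$ with the co-cores and verifying this triangularity independently of $m$ is the heart of the proof; once it is in place, the remaining bookkeeping is identical to the two-bridge case.
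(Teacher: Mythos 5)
There is a genuine gap, and it sits exactly where you locate ``the heart of the proof.'' Your global architecture is right and matches the paper: $g(K_m)=2$, the word $W$ depends only on the genus and $n$, the canceling pairs for $\alpha_{8+i}^*$ $(i=1,\dots,2n-2)$ transfer from Theorem~\ref{Theorem:EnK}, the second block $\alpha_{4+i}^*$ is handled by sliding $B_{5-i}$ over the $\widetilde H_j$, and the new work is the four pairs for $\alpha_1^*,\dots,\alpha_4^*$. But your plan for those four pairs --- a Stallings analogue of Lemma~\ref{Lemma:alpha_i} asserting that $\phi_{K_m}(\beta_{i-1})*\beta_{i-1}^{-1}$ is triangular, i.e.\ a word in $\{\alpha_0^{\pm1},\dots,\alpha_i^{\pm1}\}$ with a single $\alpha_i^{\pm1}$, so that one can cancel $\alpha_1^*,\alpha_2^*,\alpha_3^*,\alpha_4^*$ sequentially --- is false for this monodromy, and no choice of coordinates rescues it. In the paper's presentation $\phi_m=t_{a_3}^m\circ t_{a_4}\circ t_{b_2}\circ t_{a_2}^{-1}\circ t_{a_1}^{-1}$, the Stallings twisting is along the \emph{chain} curve $a_3$, so $t_{a_3}^m$ is the harmless part: by Lemma~\ref{Lemma:twist} it only produces words in $\{\alpha_2^{\pm1},\alpha_3^{\pm1}\}$, uniformly in $m$. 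The genuine obstruction is the fifth curve $b_2$ in the base monodromy $\phi_0$. Because of $t_{b_2}$, every image $\phi_m(B_i)$ for $i=0,1,2,3$ carries the common prefix $\eta * t_{a_3}^m(\alpha_3)$, where $\eta\simeq\alpha_0^{-1}*\alpha_1*\alpha_2^{-1}*\alpha_3*\alpha_4^{-1}$ (Figure~\ref{fig:Phi0}); in particular each of these words already contains $\alpha_4^{-1}$. So your induction cannot start: at the first stage nothing has been cancelled, and the extra letters you propose to ``absorb by handle slides'' have no canceling handles yet --- the scheme is circular. Your diagnosis of the difficulty is correspondingly misplaced: uniformity in the power $m$ is the easy part (it never leaves $\{\alpha_2^{\pm1},\alpha_3^{\pm1}\}$), while triangularity already fails at $m=0$.

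The paper breaks this circularity with a move absent from your toolkit: it slides the un-reduced handles over \emph{each other} to cancel the shared prefix. Sliding $H_1$ over $H_2$ kills $\eta*t_{a_3}^m(\alpha_3)$ and leaves $\widehat H_{1,2}\simeq\alpha_2^{-1}*\alpha_0$, giving the first pair $(\alpha_2^*,\widetilde H_2)$; sliding $H_3$ over $H_2$ leaves $\widehat H_{3,2}\simeq\eta*\alpha_0$, which is then used to reduce $H_0$ and the others. The cancellation then proceeds in the non-sequential order $\alpha_2^*,\ \alpha_3^*,\ \alpha_1^*,\ \alpha_4^*$, with each later reduction sliding over both the prefix-cancelled handles $\widehat H_{i,j}$ and the already-built $\widetilde H_j$. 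If you want to complete your write-up, the fix is to abandon the index-by-index triangular lemma and instead (i) verify the explicit words $\phi_m(B_i)$ and the fact that $a_3$ meets only the letters $\alpha_2^{-1}$, $\alpha_3$ and $\beta_3^{-1}$ occurring in them, and (ii) exploit the common prefix by handle slides among the $H_i$ themselves before any $1$-handle is cancelled.
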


\begin{proof}
It is well known that $K_m$ is a genus $2$ fibered knot and it has monodromy
\[
\phi_m = t_{a_3}^m \circ  t_{a_4}  \circ t_{b_2} \circ t_{a_2}^{-1} \circ t_{a_1}^{-1} 
\]
where $a_i$ ($i=1, 2, 3, 4$) and $b_2$ are shown in Figure~\ref{fig:Curves}. 

As mentioned in subsection~\ref{subsection:MF}, $E(n)_{K_m}$  decomposed into two Lefschetz fibrations $X_i$ ($i=1, 2$) over $D^2$ with monodromy factorizations $\Phi_{K_m}(W) \cdot W$ and $W \cdot \Phi_{K_m}(W)$. 
We observe that $\Phi_{K_m} = \phi_m = t_{a_3}^m \circ \phi_0$.
\medskip

\begin{figure}[htb]
\begin{center}
\includegraphics[scale=0.35]{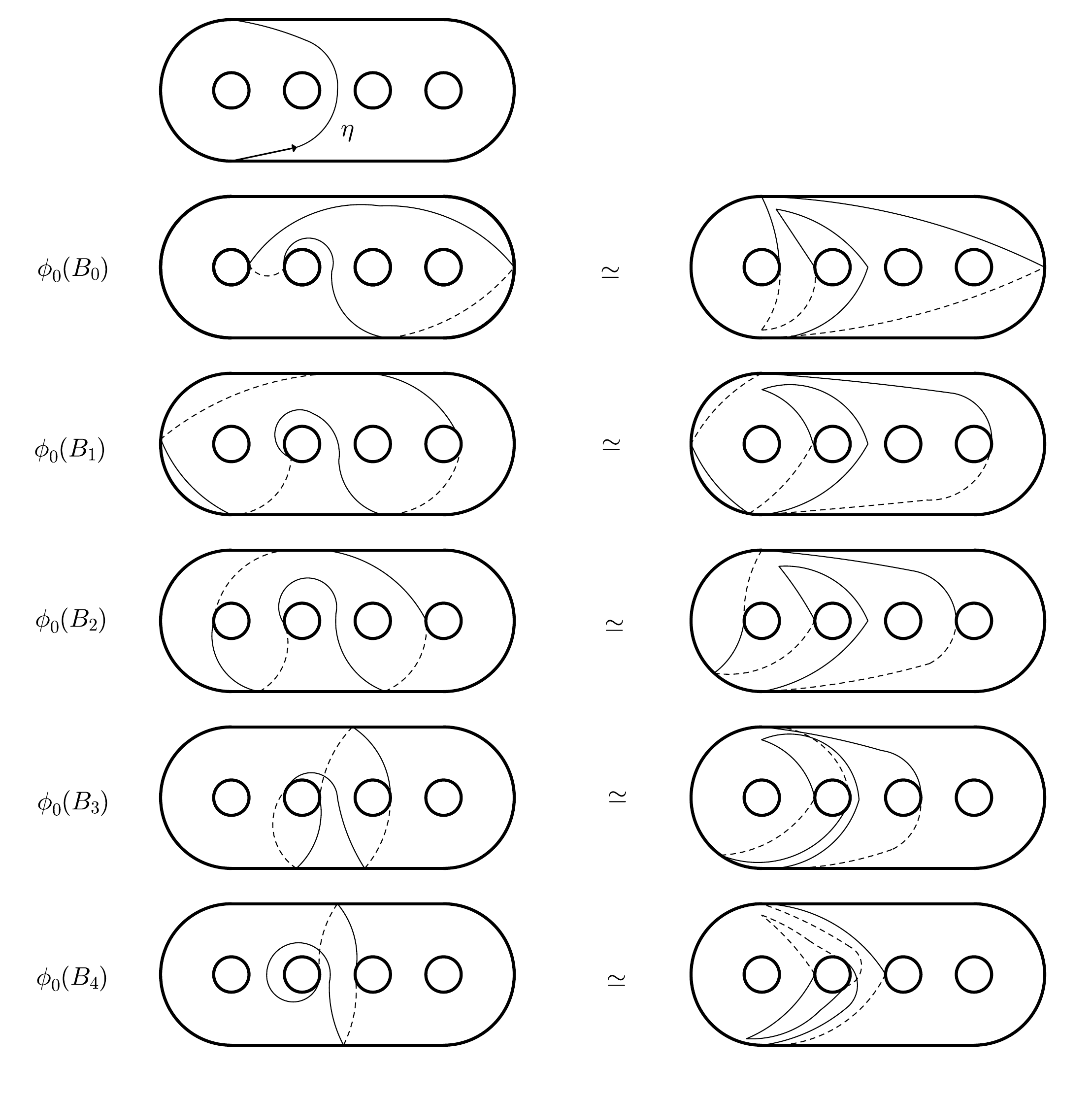}
\end{center}
\caption{word for $\phi_0(B_i)$ }\label{fig:Phi0}
\end{figure}

\noindent
{\bf Case $n=1$: }
$X_1$ contains $13$ $2$-handles 
\[
\phi_m(B_i),   \phi_m(c_1),  B_i, c_1, \partial F_1 \quad (i=0, 1, 2, 3, 4),
\] 
attached from right to left. Similarly,  $X_2$ also contains $13$ $2$-handles 
\[
B_{4-i},   c_1, \phi_m(B_{4-i}), \phi_m(c_1),  \partial F_1 \quad (i=0, 1, 2, 3, 4).
\] 
Let $\eta$ be an oriented path as in Figure~\ref{fig:Phi0}.
Then $ \alpha_0  * \eta \simeq \alpha_1*\alpha_2^{-1}*\alpha_3*\alpha_4^{-1}$.  Using homotopy, we find:
\begin{eqnarray*}
\phi_m(B_0) &\simeq& \eta*t_{a_3}^m(\alpha_3)*t_{a_3}^m(\alpha_2^{-1})*\widetilde{\alpha}_9, \\
\phi_m(B_1) &\simeq&\eta*t_{a_3}^m(\alpha_3)* \beta_0*\alpha_8, \\
\phi_m(B_2) &\simeq& \eta*t_{a_3}^m(\alpha_3)*\beta_1*\alpha_7 , \\
\phi_m(B_3) &\simeq& \eta*t_{a_3}^m(\alpha_3)*\beta_4*\alpha_6,\\
\phi_m(B_4) &\simeq&  \beta_4*t_{a_3}^m(\beta_3^{-1})*\beta_4*\alpha_5.
\end{eqnarray*}
This holds because (as in figure~\ref{fig:Phi0}):
\begin{eqnarray*}
\phi_0(B_0) &\simeq& \eta*\alpha_3*\alpha_2^{-1}*\widetilde{\alpha}_9, \\
\phi_0(B_1) &\simeq&\eta*\alpha_3* \beta_0*\alpha_8, \\
\phi_0(B_2) &\simeq& \eta* \alpha_3*\beta_1*\alpha_7 , \\
\phi_0(B_3) &\simeq& \eta*\alpha_3*\beta_4*\alpha_6,\\
\phi_0(B_4) &\simeq&  \beta_4*\beta_3^{-1}*\beta_4*\alpha_5.
\end{eqnarray*}
and the simple closed curve $a_3$ meets only  $\alpha_2^{-1}$, $\alpha_3$ and  $\beta_3^{-1}$.

We also have 
\begin{eqnarray*}
B_0 &\simeq& \beta_0 * \widetilde{\alpha}_9, \\
B_1 &\simeq& \beta_1*\alpha_8,\\
B_2 &\simeq& \beta_2* \alpha_7,\\
B_3 &\simeq& \beta_3*\alpha_6,\\
B_4 &\simeq& \beta_4*\alpha_5.
\end{eqnarray*}

Let $H_i$ be the $4$-dimensional $2$-handle obtained by sliding $\phi_m(B_i)$ over $B_i$ ($i=0, 1, 2, 3, 4$). The attaching circles are homotopic to:
\begin{eqnarray*}
H_0 &:& \eta*t_{a_3}^m(\alpha_3)*t_{a_3}^m(\alpha_2^{-1})*\beta_0^{-1} \simeq \eta*\alpha_3*\alpha_2^{-1}*\alpha_0, \\
H_1 &:& \eta*t_{a_3}^m(\alpha_3)*\beta_0*\beta_1^{-1}, \\
H_2 &:& \eta*t_{a_3}^m(\alpha_3)*\beta_1*\beta_2^{-1} \simeq  \eta*t_{a_3}^m(\alpha_3)*\alpha_0^{-1}*\alpha_2*\alpha_1^{-1}*\alpha_0 , \\
H_3 &:& \eta*t_{a_3}^m(\alpha_3)*\beta_4*\beta_3^{-1}, \\
H_4 &:& \beta_4*t_{a_3}^m(\beta_3^{-1}).
\end{eqnarray*}

Let $\widehat{H}_{i,j}$ be the $2$-handle obtained by sliding $H_i$ over $H_j$ . Then attaching circles are homotopic to:
\begin{eqnarray*}
\widehat{H}_{1,2} &:&  \beta_2*\beta_1^{-1}*\beta_0*\beta_1^{-1} \simeq \alpha_2^{-1}*\alpha_0,\\
\widehat{H}_{3, 2} &:& \beta_4 *\beta_3^{-1}*\beta_2*\beta_1^{-1} \simeq \alpha_0^{-1}*\alpha_1*\alpha_2^{-1}*\alpha_3*\alpha_4^{-1} *\alpha_0 \simeq \eta * \alpha_0.
\end{eqnarray*}

We perform the $2$-handle slides in the following order:
\begin{itemize}
\item[(1)] Let $\widetilde{H}_2 = \widehat{H}_{1, 2}$. Then $(\alpha_2^*, \widetilde{H}_2 )$ forms a canceling pair.

\item[(2)] Let $\widetilde{H}_3$ be obtained by sliding $H_0$ over $\widehat{H}_{3, 2}$ and $\widetilde{H}_2$. Then $(\alpha_3^*, \widetilde{H}_3 )$  forms a canceling pair.

\item[(3)] Let $\widetilde{H}_1$ be obtained by sliding $H_2$ over $\widehat{H}_{3,2}$ and then over $\widetilde{H}_2$ or $\widetilde{H}_3$ corresponding to the letters $\alpha_2^{\pm 1}$ and $\alpha_3^{\pm 1}$, respectively, in the word $\alpha_0^{-1}*t_{a_3}^m(\alpha_3)\allowbreak *\alpha_0^{-1}*\alpha_2$. 
Then $(\alpha_1^*, \widetilde{H}_1 )$ forms a canceling pair.
(Recall that $t_{a_3}^m(\alpha_3)$ is a word in $\{\alpha_2^{\pm 1}, \alpha_3^{\pm 1}\}$ because $t_{a_3}(\alpha_3) \simeq \alpha_2$, $t_{a_3}(\alpha_2) \simeq \alpha_2*\alpha_3^{-1}*\alpha_2$, $t_{a_3}^{-1}(\alpha_3)\simeq \alpha_3*\alpha_2^{-1}*\alpha_3$ and $t_{\alpha_3}^{-1}(\alpha_2) \simeq \alpha_3$ by Lemma~\ref{Lemma:twist}.)

\item[(4)] Let $\widetilde{H}_4$ be obtained by sliding $H_3$ over $H_2$ (which is $\widehat{H}_{3, 2}$) and then over $\widetilde{H}_1$, $\widetilde{H}_2$ and $\widetilde{H}_3$. Then $(\alpha_4^*, \widetilde{H}_4 )$  forms a canceling pair.
\end{itemize}

In order to obtain the $2$-handle $\widetilde{H}_{4+i}$ ($i=1, 2, 3, 4$) which cancels the $1$-handle $\alpha_{4+i}^*$, we 
slide $B_{5-i}$ over $\widetilde{H}_j$ ($j= 1, 2, 3, 4$) whenever the letter $\alpha_j^{\pm 1}$ appears in the word for $B_{5-i}$.
\medskip

\noindent
{\bf  Case $n \ge 2$: } The process for finding $1$- and $2$-handle canceling pairs is  identical to Theorem~\ref{Theorem:EnK}. We first find canceling $2$-handle of $\alpha_i^*$ ($i=1, 2, 3, 4$) as in the $n=1$ case above. Then,  we find canceling $2$-handle of $\alpha_{8+i}^*$ ($i=1, 2, \cdots, 2n-2$) as in the proof of Theorem~\ref{Theorem:EnK}. Finally, we find canceling $2$-handle of $\alpha_i^*$ ($i=5, 6, 7, 8$) using $B_{5-i}$.
\end{proof}

%%%%%
%%%%% Bibliography
%%%%%

%%%%%%%%%%%%%%%%%%%%%%%%%%%%%%%%%%%%%%%%%%%%
%\bibliographystyle{plainurl}
%
%\bibliography{handlebody.bib}
%
%\end{document}

\end{document}